\documentclass[11pt,leqno,oneside,letterpaper]{amsart}

\usepackage[T1]{fontenc}
\usepackage[utf8]{inputenc}
\usepackage{lmodern}
\usepackage{eulervm}
\usepackage{amsmath,amssymb,amsthm}
\usepackage[margin=1.15in]{geometry}
\usepackage{booktabs,array}
\usepackage{enumitem}
\usepackage{xurl}
\usepackage[colorlinks,linkcolor=blue,citecolor=blue]{hyperref}
\usepackage[nameinlink,noabbrev]{cleveref}
\usepackage{orcidlink}
\newtheorem{theorem}{Theorem}[section]
\newtheorem{proposition}[theorem]{Proposition}
\newtheorem{lemma}[theorem]{Lemma}
\newtheorem{corollary}[theorem]{Corollary}

\theoremstyle{remark}
\newtheorem{remark}[theorem]{Remark}

\numberwithin{equation}{section}

\newcommand{\Q}{\mathbb Q}
\newcommand{\Z}{\mathbb Z}
\newcommand{\F}{\mathbb F}

\DeclareMathOperator{\Cl}{Cl}
\DeclareMathOperator{\Nm}{N}
\DeclareMathOperator{\ord}{ord}
\DeclareMathOperator{\rank}{rank}

\title[Ideal classes and rational points]{Ideal Classes of Order $2^n$ in a Family of Totally Real Cubic Fields and Rational Points on Associated Elliptic Curves}
\author{Junyu Lu\,\orcidlink{0009-0001-4973-3198}}
\address{Department of Mathematics, Yang-En University, Quanzhou, Fujian, China 362014}
\email{jy.lu@outlook.com}
\date{}
\subjclass[2020]{Primary 11R29; Secondary 11G05, 11R16, 11D09}
\keywords{totally real cubic field, ideal class group, elliptic curve, squareclass, Pell equation}

\begin{document}
\begin{abstract}
  For every prime $\ell\geq5$ and integer $n\geq2$, we construct a totally real cubic field in the family studied by Ennola with a specified prime ideal above $\ell$ whose class has exact order $2^n$. For parameters satisfying two square conditions and explicit congruences, the same squareclass obstructions give four independent rational points on the associated elliptic curves. A Pell equation describes all such parameters. A genus-one base change gives five independent sections that generate a subgroup saturated at two. It also yields infinitely many elliptic curves over $\Q$, pairwise nonisomorphic over $\overline{\Q}$, with rank at least five.
\end{abstract}
\maketitle

\section{Introduction}

We study a family of totally real cubic fields considered by Ennola in connection with exceptional units. Recall that a unit $\varepsilon$ is exceptional if $\varepsilon-1$ is also a unit \cite{Ennola1991,Louboutin2017}. For an integer $a\geq3$, let $\theta$ be the largest real root of
\[
  f_a(X)=X^3+(a-1)X^2-aX-1=X(X-1)(X+a)-1
\]
and put $K_a=\Q(\theta)$. This choice fixes the real embedding in which both $\theta$ and $\theta-1$ are positive. The polynomial $f_a(X)$ is irreducible, has three real roots, and has positive nonsquare discriminant $(a^2+3a-1)^2-32$. Thus $K_a$ is a totally real, non-Galois cubic field.

Ennola conjectured that $\theta$ and $\theta-1$ form a fundamental system of units of the maximal order; see \cite{Ennola1991,Louboutin2017,ChoiKim2025}. We require only the known description of the units of $K_a$ modulo squares, so our results do not depend on this conjecture. We write $\Cl(K_a)$ for the ordinary ideal class group of the maximal order $\mathcal O_{K_a}$. The same squareclasses control principal ideal powers and the independence of rational points on
\begin{equation}\label{eq:elliptic}
  E_a:\quad y^2=x(x+1)(x-a)+1.
\end{equation}
For $x\in\Q$, the connection is the identity
\begin{equation}\label{eq:common-norm}
  \Nm_{K_a/\Q}(x+\theta)=x(x+1)(x-a)+1.
\end{equation}

We use the following notation and conventions. For a number field $L$, $\Nm_{L/\Q}$ denotes the field norm. For a nonzero integral ideal $\mathfrak c\subset\mathcal O_L$, set $\Nm\mathfrak c=\#(\mathcal O_L/\mathfrak c)$, and extend this norm multiplicatively to nonzero fractional ideals. Brackets denote element squareclasses in $K_a^\times/K_a^{\times2}$ or ideal classes in $\Cl(K_a)$, according to the argument. Finite-place valuations are normalized to have value one on a uniformizer. The point at infinity is the identity $O$ of $E_a$.

Our first result prescribes both the rational prime and the exact order of a prime ideal class.

\begin{samepage}
  \begin{theorem}\label{thm:prescribed-prime}
    Let $\ell\geq5$ be a rational prime. For integers $n\geq2$, put $N=2^n$ and
    \[
      (D,a)=
      \begin{cases}
        \left(4,(\ell^N-73)/12\right),     & \ell\ne137, \\[2pt]
        \left(24,(137^N-14353)/552\right), & \ell=137.
      \end{cases}
    \]
    Then $a$ is an integer with $a\geq3$ and $a\equiv2\pmod4$. Moreover, the ideal
    \[
      \mathfrak p=(\ell,D\theta-D+1)\mathcal O_{K_a}
    \]
    is prime of norm $\ell$, and
    \[
      \mathfrak p^N=(D\theta-D+1),\qquad
      \ord_{\Cl(K_a)}[\mathfrak p]=N.
    \]
    For each fixed $\ell$, these parameters give infinitely many isomorphism classes of fields $K_a$ as $n$ varies.
  \end{theorem}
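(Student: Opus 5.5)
The plan is to reduce everything to the norm identity \eqref{eq:common-norm}, a simple-root argument modulo $\ell$, and squareclass obstructions for the units. First the arithmetic of $a$. Since $\ell\geq5$ is prime, $\ell^N\equiv1\pmod{24}$ for $N=2^n\geq4$, and a short check modulo $48$ gives $12\mid\ell^N-73$ with $a\equiv2\pmod4$. For $\ell=137$ one checks $552=8\cdot3\cdot23\mid 137^N-14353$ and the congruence modulo $4$ directly. The bound $a\geq3$ is immediate from the size of $\ell^N$.

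Next I compute the norm of $\alpha=D\theta-D+1=D(\theta+x_0)$ with $x_0=(1-D)/D$. By \eqref{eq:common-norm}, $\Nm_{K_a/\Q}(\alpha)=D^3\bigl(x_0(x_0+1)(x_0-a)+1\bigr)$. For $D=4$ this equals $12a+73=\ell^N$, and for $D=24$ it equals $552a+14353=137^N$. So in both cases $\Nm(\alpha)=\ell^N$, and this is exactly how $a$ was chosen.

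Now let $c=(D-1)/D\in\F_\ell$. Any prime $\mathfrak q\mid\ell$ with $\alpha\in\mathfrak q$ satisfies $\theta\equiv c\pmod{\mathfrak q}$, so $f_a(c)\equiv0\pmod\ell$. I check that $c$ is a \emph{simple} root of $f_a$ modulo $\ell$.
\begin{itemize}
\item For $D=4$, substituting $12a\equiv-73$ gives $f_a'(3/4)=3/16+a/2\equiv-137/48\pmod\ell$. This is nonzero precisely when $\ell\neq137$, which explains the exceptional case.
\item For $D=24$ and $\ell=137$, the analogous computation should give a unit modulo $137$.
\end{itemize}
A simple root of $f_a$ modulo $\ell$ lifts (Hensel) to a unique factor of $f_a$ over $\Z_\ell$ of degree one, even if $\ell$ divides the index $[\mathcal O_{K_a}:\Z[\theta]]$. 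Hence there is a unique prime $\mathfrak p$ of residue degree $1$ above $\ell$ with $\theta\mapsto c$, and $\mathfrak p$ is unramified. This $\mathfrak p$ is the only prime containing $\alpha$, so $\mathfrak p=(\ell,\alpha)$ has norm $\ell$. Comparing norms then gives $(\alpha)=\mathfrak p^N$.

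For the exact order, the order of $[\mathfrak p]$ divides $N=2^n$, so it suffices to show that $\mathfrak p^{N/2}$ is not principal. If $\mathfrak p^{N/2}=(\beta)$, then $\alpha=\varepsilon\beta^2$ for a unit $\varepsilon$, so $[\alpha]$ lies in the image of $\mathcal O_{K_a}^\times$ in $K_a^\times/K_a^{\times2}$. I invoke the known description of that image, generated by $[-1],[\theta],[\theta-1]$. Then I rule out the eight products $\pm\theta^i(\theta-1)^j\alpha$ being squares by local obstructions:
\begin{itemize}
\item signs at the three real embeddings (the roots of $f_a$ lie near $1$, $0^-$ and $-a$, while $\alpha$ has a computable sign pattern), which should eliminate most cases;
\item a residue symbol at a suitable prime above $2$, using $a\equiv2\pmod4$ and $D$ even, or at $\mathfrak p$-adjacent auxiliary primes, for the remaining cases.
\end{itemize}
This step is the main obstacle. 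One must find obstructions that work uniformly in $\ell$ and $n$, and the $2$-adic analysis (where $\alpha\equiv1\pmod{D}$ makes $\alpha$ look locally like a square) needs care. I would first try the sign vectors and then the quadratic character of $\alpha\varepsilon$ modulo a prime above a small odd prime dividing $f_a$'s values.

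Finally, infinitely many isomorphism classes arise for fixed $\ell$. A fixed field has only finitely many primes above $\ell$, each with a fixed class order. Since the orders $2^n$ are pairwise distinct, each field can occur for at most finitely many $n$.
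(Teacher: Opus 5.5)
\textbf{Gap in the exact-order step.} Up to $(\alpha)=\mathfrak p^N$ your route is the paper's: the norm identity, the simple root $(D-1)/D$ of $f_a$ modulo $\ell$, and Hensel's lemma giving a single degree-one prime dividing $\alpha$. Together these are exactly Lemma~\ref{lem:local-power}. The $D=24$ derivative that you leave as ``should give a unit'' is $f_a'(23/24)=(22\cdot137^N-304657)/13248$, and $304657\equiv106\pmod{137}$, so it is a unit. Your argument for infinitely many fields is also valid: a fixed field has at most three primes above $\ell$, each with a fixed class order.

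The gap is the exact order. You correctly reduce it to showing that $\alpha=D\theta-D+1$ is not a unit times a square. But you then leave this as ``the main obstacle'', and it is the whole content of the claim $\ord[\mathfrak p]=N$. Neither of your proposed tests can supply it as stated:
\begin{itemize}
\item For $a\equiv2\pmod4$ the prime $2$ is inert with residue field $\F_8$, where every element is a square, so a residue symbol at the dyadic prime is vacuous.
\item A quadratic character at a small odd prime $p$ is not uniform in $\ell$ and $n$, because $a\bmod p$, and with it the splitting of $p$, changes with $\ell^N$.
\end{itemize}

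\textbf{The missing idea: a congruence modulo $4$.} Here $\alpha\equiv1$ is the mechanism, not an obstacle.
\begin{enumerate}
\item Squares have positive norm, so the four classes $-\alpha\theta^i(\theta-1)^j$ are excluded.
\item $\theta$, $\theta-1$ and $\alpha$ all have signature $(-,-,+)$, which excludes $\alpha$ and $\alpha\theta(\theta-1)$.
\item The remaining classes $\alpha\theta$ and $\alpha(\theta-1)$ are totally positive. Since $4\mid D$, we have $\alpha\equiv1\pmod{4\mathcal O_{K_a}}$. So it suffices that $\theta$ and $\theta-1$ are not squares modulo $4$.
\item Because $d_a$ is odd, the index $[\mathcal O_{K_a}:\Z[\theta]]$ is odd. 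Hence $\mathcal O_{K_a}/4\mathcal O_{K_a}\cong(\Z/4\Z)[X]/(X^3+X^2+2X-1)$.
\item Squares modulo $4$ depend only on the residue modulo $2$. In $\F_8$ the unique square roots of $\theta$ and $\theta-1$ are $\theta^2+\theta+1$ and $\theta^2+\theta$. But $(\theta^2+\theta+1)^2\equiv\theta+2$ and $(\theta^2+\theta)^2\equiv2\theta^2+3\theta+1\pmod 4$, and neither equals $\theta$ or $\theta-1$.
\item A square root in $K_a$ of an algebraic integer is integral, so this rules out the last two classes.
\end{enumerate}
This is the paper's Lemma~\ref{lem:obstruction}. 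Combined with the unit squareclass description you invoke, it gives the exact order $N$.
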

\end{samepage}

The change of linear form at $137$ is needed for the stated prime-ideal construction; see Remark~\ref{rem:137}. The order $\Z[\theta]$ need not be maximal. Proposition~\ref{prop:nonmaximal} gives an explicit infinite sequence where it is not.

\begin{theorem}\label{thm:four-points}
  Suppose that $a\geq3$ with $a\equiv30$ or $34\pmod{52}$. If there are positive integers $u,v$ satisfying
  \begin{equation}\label{eq:two-squares}
    u^2=12a+73,\qquad v^2=240a+4321,
  \end{equation}
  then the points
  \[
    P=(0,1),\quad Q=(-1,1),\quad
    S=(-3/4,u/8),\quad T=(-15/16,v/64)
  \]
  are independent modulo torsion in $E_a(\Q)$. The subgroup $H$ generated by $P,Q,S,T$ is saturated at two in the sense that $2R\in H$ for $R\in E_a(\Q)$ implies $R\in H$.

  If also $137\nmid u$ and $\gcd(v,23\cdot2503)=1$, then
  \[
    \mathfrak a=(u,4\theta-3)\mathcal O_{K_a},\qquad
    \mathfrak b=(v,16\theta-15)\mathcal O_{K_a}
  \]
  are ideals of the maximal order with
  \[
    \Nm\mathfrak a=u,\quad \mathfrak a^2=(4\theta-3),\qquad
    \Nm\mathfrak b=v,\quad \mathfrak b^2=(16\theta-15),
  \]
  and their classes generate a subgroup isomorphic to $(\Z/2\Z)^2$ in $\Cl(K_a)$.
\end{theorem}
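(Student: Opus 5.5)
The approach is a $2$-descent through $K_a$. Since $f_a$ is irreducible, $E_a(\Q)[2]=0$. By \eqref{eq:common-norm}, the map $x\mapsto[x+\theta]$ (extended by $O\mapsto1$) gives the standard injective homomorphism
\[
  \delta\colon E_a(\Q)/2E_a(\Q)\to K_a^\times/K_a^{\times2}.
\]
Its values on the given points are
\[
  \delta(P)=[\theta],\quad \delta(Q)=[\theta-1],\quad \delta(S)=[4\theta-3],\quad \delta(T)=[16\theta-15].
\]
The heart of the proof is to show that these four squareclasses are independent over $\F_2$.

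Granting this, the first two assertions follow formally. If $\sum n_iX_i$ is torsion, then, since torsion has odd order $m$, we get $\sum mn_iX_i=0$. Because there is no $2$-torsion, we may divide the relation by $2$ until some coefficient is odd. That gives a nontrivial relation in $E_a(\Q)/2E_a(\Q)$, which is impossible. For saturation, suppose $2R=h\in H$. Then $\delta(h)=0$, so independence of $\delta$ on $H/2H$ gives $h=2h'$ with $h'\in H$. Hence $2(R-h')=0$, and so $R=h'$.

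To prove independence, I would exhibit four $\F_2$-valued characters on $K_a^\times/K_a^{\times2}$ whose $4\times4$ matrix on these classes is nonsingular. The available characters are of three kinds.
\begin{itemize}
  \item Signs at the three real embeddings. Every class involved has positive norm: the norms are $1$, $1$, $u^2$ and $v^2$ up to sign conventions, to be checked. So at most two sign characters are independent. The roots of $f_a$ lie near $-a$, in $(-1,0)$ and in $(1,2)$, which makes these signs explicit.
  \item Legendre symbols at degree-one primes above $13$. The hypothesis $a\equiv30,34\pmod{52}$ gives $a\equiv4,8\pmod{13}$. I expect $f_a$ to have a root $r$ modulo $13$ for these residues, and then I evaluate $\left(\frac{r}{13}\right)$, $\left(\frac{r-1}{13}\right)$, $\left(\frac{4r-3}{13}\right)$ and $\left(\frac{16r-15}{13}\right)$.
  \item A $2$-adic character. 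The congruence $a\equiv2\pmod4$ fixes the $2$-adic factorization of $f_a$, so a unit-squareclass test in some $2$-adic completion can be used.
\end{itemize}
The main obstacle is choosing these characters so that the matrix really has full rank uniformly in $a$ within the two residue classes. This is exactly where the congruences modulo $52$ are used, and I would first test numerically which primes above $13$ and which completion at $2$ work.

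For the class group statement, I would first identify the norms. Scaling \eqref{eq:common-norm} gives
\[
  \Nm(4\theta-3)=-64\Nm(-3/4+\theta)\cdot(\pm1),
\]
and since $E_a$ at $x=-3/4$ has $y^2=u^2/64$, this yields $\Nm(4\theta-3)=u^2$. In the same way, $\Nm(16\theta-15)=v^2$. Next, take a prime $p\mid u$. Then $4\theta-3$ determines a root $r=3/4$ of $f_a$ modulo $p$. I would check that the conditions $137\nmid u$ and $\gcd(v,23\cdot2503)=1$ are exactly what force $r$ to be a simple root. They should come from resultants of $f_a$ and $f_a'$ at $X=3/4$ and $X=15/16$, reduced using $u^2=12a+73$ and $v^2=240a+4321$. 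Given a simple root, $p$ is prime to the index $[\mathcal O_{K_a}:\Z[\theta]]$ at that prime, and $(p,4\theta-3)$ is a degree-one prime $\mathfrak q$. It is the only prime above $p$ dividing $4\theta-3$, so $v_{\mathfrak q}(4\theta-3)=2v_p(u)$. Taking the product over $p\mid u$ gives $\mathfrak a^2=(4\theta-3)$ and $\Nm\mathfrak a=u$, and the same argument applies to $\mathfrak b$.

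Finally, suppose $\mathfrak a^{e_1}\mathfrak b^{e_2}=(\alpha)$. Then $(4\theta-3)^{e_1}(16\theta-15)^{e_2}=\eta\alpha^2$ for some unit $\eta$. The known description of units modulo squares says their squareclasses are spanned by $[-1]$, $[\theta]$ and $[\theta-1]$. The class $[-1]$ is excluded because its norm is $-1$, while all the other classes have square norm. Independence of the four squareclasses then forces $e_1\equiv e_2\equiv0\pmod2$. So $[\mathfrak a]$ and $[\mathfrak b]$ generate a copy of $(\Z/2\Z)^2$ in $\Cl(K_a)$.
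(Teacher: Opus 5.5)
\textbf{The gap is in the independence step.} Your reduction to the $\F_2$-independence of $[\theta],[\theta-1],[4\theta-3],[16\theta-15]$ matches the paper. The formal consequences are essentially fine: independence modulo torsion, saturation at two, the Hensel/norm argument for $\mathfrak a,\mathfrak b$, and the class-group conclusion. The problem is that you leave the independence itself to a search for a nonsingular $4\times4$ matrix of local characters built from real signs, Legendre symbols at degree-one primes above $13$, and a dyadic test. In the case $a\equiv30\pmod{52}$ no such matrix exists, because $h=\theta(\theta-1)$ is a local square at every one of these places:
\begin{itemize}
  \item \emph{Real places.} $h$ is totally positive.
  \item \emph{The dyadic prime.} $h$ is a totally positive unit with $h\equiv(\theta^2+1)^2\pmod 4$, so $K_a(\sqrt h)/K_a$ is unramified everywhere (Ennola's extension). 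The only dyadic prime $(2)$ is inert and principal, so it splits there, and $h$ is a $2$-adic square.
  \item \emph{Primes above $13$.} For $a\equiv4\pmod{13}$ one has $f_a\equiv(X-6)(X^2+9X+11)\pmod{13}$. So $(13,\theta-6)$ is the only degree-one prime above $13$, and there $h\equiv30\equiv2^2$. Since $\Nm_{K_a/\Q}(h)=1$, $h$ is also a square at the degree-two prime.
\end{itemize}
Hence every character you propose takes the same value on $[\theta]$ and $[\theta-1]$, and your matrix has two equal columns. Explicitly, the obtainable rows span only $(1,1,1,1)$ (sign), $(1,1,0,0)$ (dyadic), and $(1,1,1,0)$ (at $(13,\theta-6)$), which has rank three. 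Characters at other primes depend on $a$ modulo those primes, and the hypotheses do not fix that. For $a\equiv34\pmod{52}$ your plan does succeed: the primes with $\theta\equiv3$ and $\theta\equiv12$ give rows $(0,1,0,1)$ and $(0,1,1,1)$, which together with the sign and mod-$4$ rows are nonsingular. The other case, however, fails.

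\textbf{The missing idea and how to repair it.} You need a global proof that $\theta(\theta-1)$ is not a square in $K_a$; equivalently, that the unit $\theta/(\theta-1)=\theta^2+a\theta+1$ is not a square. The paper takes this from Louboutin (Lemma~\ref{lem:units}). A square root would be a norm-one cubic unit with minimal polynomial $Z^3-sZ^2+wZ-1$. Comparing symmetric functions of the squared conjugates gives $(s-w)(s+w+2)=1$, which forces $a=\pm2$.

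You already invoke this unit-squareclass description in your last paragraph; using it in the independence step repairs the argument. Local tests then only have to show that no product involving $4\theta-3$ or $16\theta-15$ lies in $\langle[\theta],[\theta-1]\rangle$. The three rows above do exactly that, since their kernel is $\{1,[\theta(\theta-1)]\}$.

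The paper organizes the same check product by product:
\begin{itemize}
  \item Lemma~\ref{lem:obstruction} (signs plus non-squareness of $\theta,\theta-1$ modulo $4$) handles products containing exactly one of $4\theta-3$, $16\theta-15$.
  \item Signs handle $(4\theta-3)(16\theta-15)\theta$ and $(4\theta-3)(16\theta-15)(\theta-1)$.
  \item Residues above $13$ handle $(4\theta-3)(16\theta-15)$ and $(4\theta-3)(16\theta-15)\theta(\theta-1)$.
\end{itemize}
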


The square conditions define $v^2-20u^2=2861$. Theorem~\ref{thm:pell} describes all positive solutions giving either required congruence on $a$, and Proposition~\ref{prop:subsequence} provides infinitely many parameters satisfying the ideal-class hypotheses. We also obtain an explicit family over $\Q(t)$ of rank at least four in Corollary~\ref{cor:rational-family}.

Proposition~\ref{prop:rank-five-rational} gives infinitely many rational parameters with rank at least five by a further genus-one base change. This construction has only one integral value of $a$, as shown in Remark~\ref{rem:rank-five-integrality}.

\subsection*{Relation to earlier work}

Ennola determined the unit squareclasses and constructed the unramified quadratic extension $K_a(\sqrt{\theta(\theta-1)})/K_a$ for $a\equiv2\pmod4$; he also obtained families with class-group $2$-rank at least two for sufficiently large admissible parameters \cite[Propositions 3.3 and 6.1, Theorem 6.1]{Ennola1991}. We recall Louboutin's short proof of the squareclass result \cite[Lemma 6]{Louboutin2017} and, under our ideal-class hypotheses, construct a second, linearly disjoint unramified quadratic extension in Corollary~\ref{cor:hilbert}.

Lee \cite[Theorem 1.1]{Lee2010} constructs ideal classes of any prescribed order in infinitely many fields of the same cubic family, written using $\rho=1/(\theta-1)$ and parameter $a+1$. Theorem~\ref{thm:prescribed-prime} prescribes the rational prime and gives explicit formulas for a prime ideal above it whose class has exact order $2^n$, without assuming $\mathcal O_{K_a}=\Z[\theta]$.

Washington \cite[\S3, Theorem 1]{Washington1987} related elliptic-curve ranks to ideal classes in simplest cubic fields; see also \cite{Duquesne2001,Laoudi2006}. For related elliptic curves, Walsh gives a Pell construction of rank at least three for sufficiently large parameters under auxiliary irreducibility hypotheses \cite[Theorem 1.1]{Walsh2023}, and a four-point construction with computational evidence of independence \cite[\S2, Type III]{Walsh2024}. We prove four-point independence and saturation at two under explicit square and congruence conditions, classify the resulting Pell parameters, and obtain five independent sections after a genus-one base change.

\section{Unit squareclasses and local obstructions}

The rational-root test proves that $f_a(X)$ is irreducible, since $f_a(1)=-1$ and $f_a(-1)=2a-3$. Its three roots lie respectively in $(-a,1-a)$, $(-1,0)$, and $(1,2)$, as follows by evaluation at the six endpoints. In this order of the real embeddings, both $\theta$ and $\theta-1$ have signature $(-,-,+)$ and norm one. Write
\begin{equation}\label{eq:discriminant}
  d_a=\operatorname{disc}(f_a)=(a^2+3a-1)^2-32.
\end{equation}

This integer is positive and odd. Since $\operatorname{disc}(K_a)=d_a/[\mathcal O_{K_a}:\Z[\theta]]^2$, the index is odd and $\mathcal O_{K_a}\otimes\Z_2=\Z_2[\theta]$. The Weierstrass equation \eqref{eq:elliptic} has discriminant $16d_a$, so it is nonsingular. Also $d_a$ lies strictly between the consecutive squares $(a^2+3a-2)^2$ and $(a^2+3a-1)^2$. The field discriminant is the quotient of $d_a$ by a square, so it too is nonsquare and the cubic field is non-Galois. See \cite{Ennola1991} for more details.

\begin{lemma}\label{lem:units}
  For every $a\geq3$, the positive-norm units of $K_a$, modulo squares of units, have the four distinct representatives
  \[
    1,\quad \theta,\quad \theta-1,\quad \theta(\theta-1).
  \]
\end{lemma}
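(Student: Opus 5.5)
The plan is to show that $\theta$ and $\theta-1$ are independent in $E/E^2$, where $E=\mathcal O_{K_a}^\times$, and that together with $-1$ they span $E/E^2$. For the spanning part, Dirichlet's theorem gives $E\cong\{\pm1\}\times\Z^2$, so $E/E^2\cong(\Z/2\Z)^3$. The unit $-1$ has norm $-1$, so the positive-norm units modulo squares of units form a subgroup of index two, which has order four. It therefore suffices to show that the three nontrivial elements $\theta$, $\theta-1$, $\theta(\theta-1)$ are not squares of units in $K_a$. Since all three have norm $1$, they do lie in the positive-norm subgroup, and the four listed classes would then be distinct and exhaust it.

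Signs rule out only part of this. By the sign pattern $(-,-,+)$ recorded above, $\theta$ and $\theta-1$ are negative at two real embeddings, so neither is a square. This leaves $\theta(\theta-1)$, which is totally positive, and here the sign argument gives nothing.

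For $\theta(\theta-1)$ I will use a local argument at a suitable prime, and this is the main obstacle. The first attempt is $2$-adic. Since $\mathcal O_{K_a}\otimes\Z_2=\Z_2[\theta]$, I reduce modulo $2$ and compare with $\F_2[X]/(f_a)$.
\begin{itemize}
\item If $a$ is odd, then $f_a\equiv X^3+X+1\pmod 2$ is irreducible and $2$ is inert. A square unit in the residue field $\F_8$ is fine there, since every element of $\F_8^\times$ is a square. I would then need a finer test: look modulo $4$, or use that an odd unit $\eta$ is a square in $K_\mathfrak p$ only if $\eta\equiv$ square $\pmod{4}$.
\item If $a$ is even, then $f_a\equiv X^3+X^2+1$, again irreducible over $\F_2$, and the same modulo-$4$ test applies.
\end{itemize}
Concretely, I would suppose $\theta(\theta-1)=\xi^2$ with $\xi=x+y\theta+z\theta^2\in\Z_2[\theta]$. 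Using $\theta^3=-(a-1)\theta^2+a\theta+1$, I expand $\xi^2$ modulo $4$ and compare it with $\theta^2-\theta$. The aim is to reach a contradiction in the coefficients, which should depend only on $a\bmod 4$. It may hold for some residues and fail for others.

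If the $2$-adic test fails for some classes of $a$, which Ennola's construction of the unramified extension $K_a(\sqrt{\theta(\theta-1)})$ for $a\equiv2\pmod4$ suggests can happen, I switch to a Hilbert-symbol or residue-symbol argument at an odd prime. The natural candidate uses $\theta(\theta-1)(\theta+a)=1$, which gives $\theta(\theta-1)=(\theta+a)^{-1}$. So it suffices to show that $\theta+a$ is not a square. Its conjugates are the roots of $f_a(X-a)$, and a square $\theta+a=\eta^2$ would give $\Nm(\eta)=\pm1$.

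Alternatively, I expect the cleanest route is the one the introduction attributes to Louboutin~\cite[Lemma 6]{Louboutin2017}. Assume $\theta(\theta-1)=\eta^2$. The minimal polynomial of $\eta$ then satisfies $g(X)g(-X)=-f^{*}(X^2)$, where $f^{*}$ is the minimal polynomial of $\theta(\theta-1)$. This forces a factorization of a sextic with integer coefficients, and with $g(X)=X^3+bX^2+cX\pm1$ it becomes a small Diophantine system in $a,b,c$. Showing that this system has no solution for $a\ge3$ is what I expect to be the crux. I would try it first, bounding $b$ and $c$ via the explicit real roots located in $(-a,1-a)$, $(-1,0)$ and $(1,2)$.
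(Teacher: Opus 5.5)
You have the reduction right, and it agrees with the paper. Dirichlet gives $\mathcal O_{K_a}^\times/(\mathcal O_{K_a}^\times)^2\simeq(\Z/2\Z)^3$, the norm splits off $[-1]$, and signs exclude $\theta$ and $\theta-1$. Everything then hinges on showing that the totally positive unit $\theta(\theta-1)$ is not a square. That is exactly the step you do not prove, and it is the whole content of the lemma.

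\textbf{The two local routes.} Your $2$-adic test cannot settle the class $a\equiv2\pmod4$, which is the one used throughout the paper. There $\theta(\theta-1)\equiv(\theta^2+1)^2\pmod{4\mathcal O_{K_a}}$; this is the congruence in Corollary~\ref{cor:hilbert}, and it reflects Ennola's unramified extension. So no contradiction appears modulo $4$. The test does work for $a\not\equiv2\pmod4$, but that leaves the essential class open. Your odd-prime residue test names no prime. At any odd $p$ that is inert in $K_a$ it gives nothing, because $\Nm_{K_a/\Q}(\theta+a)=1$ and every norm-one element of $\F_{p^3}^\times$ is a square. So that route would need primes chosen depending on $a$, which you do not supply.

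\textbf{The minimal-polynomial route.} This is the paper's route, and it does close the gap, but you stop at calling it the crux. Bounds from the root intervals grow with $a$ (roughly $|b|\approx a$ and $|c|=O(\sqrt a)$), so by themselves they do not reduce the problem to a finite check. Exact elimination finishes it. Write $g(X)=X^3+bX^2+cX+e$ with $e=\pm1$. The minimal polynomial of $\theta(\theta-1)=(\theta+a)^{-1}$ is
\[
  f^*(Z)=Z^3-a(a+1)Z^2+(2a+1)Z-1,
\]
and the identity $g(X)g(-X)=-f^*(X^2)$ gives
\[
  b^2-2c=a^2+a,\qquad c^2-2eb=2a+1 .
\]
Substituting these into $(2a+1)^2-1=4(a^2+a)$ gives $4ebc^2=c^4+8c-1$. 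Hence $c\ne0$ and $c\mid1$, so $c=\pm1$, and then $a\in\{-2,2\}$.

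The paper avoids the quadratic dependence on $a$ differently. It passes to $\epsilon=\theta/(\theta-1)$, which is in the same squareclass as $\theta(\theta-1)$ and has minimal polynomial $Z^3-(a+4)Z^2+(a+3)Z-1$. Subtracting the two coefficient equations then gives $(s-w)(s+w+2)=1$ at once. Without this computation, or an equivalent one, your proposal does not show that $[\theta]\ne[\theta-1]$.
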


\begin{proof}
  We recall the argument in \cite[Lemma 6]{Louboutin2017}. The signs exclude $\theta$ and $\theta-1$ from being squares. Their product has the same squareclass as
  \[
    \epsilon=\frac{\theta}{\theta-1}=\theta^2+a\theta+1.
  \]
  Since $\theta=\epsilon/(\epsilon-1)$, this unit generates $K_a$; its minimal polynomial is $Z^3-(a+4)Z^2+(a+3)Z-1$. If $\epsilon=\eta^2$ in $K_a$, then $\eta$ is an integral unit of degree three. Replacing $\eta$ by $-\eta$ if necessary, we may assume that its norm is one. Its minimal polynomial then has the form $Z^3-sZ^2+wZ-1$ with $s,w\in\Z$. Comparing the elementary symmetric functions of its squared conjugates gives
  \[
    s^2-2w=a+4,\qquad w^2-2s=a+3,
    \qquad (s-w)(s+w+2)=1.
  \]
  The integer pairs are $(s,w)=(0,-1),(-2,-1)$, giving $a=-2,2$, respectively. Neither is allowed. Thus $[\theta]$ and $[\theta-1]$ are independent in $K_a^\times/K_a^{\times2}$. A square root of a unit is a unit, by its valuations at finite primes. Dirichlet's unit theorem gives eight unit squareclasses, and $\Nm_{K_a/\Q}$ maps this group onto $\{1,-1\}$ because $\Nm_{K_a/\Q}(-1)=-1$. Its kernel therefore consists of the four displayed classes.
\end{proof}

\begin{lemma}\label{lem:obstruction}
  Suppose that $a\geq3$ and $a\equiv2\pmod4$. If $D$ is a positive integer with $4\mid D$, then $D\theta-D+1$ is not a unit times a square in $K_a$.
\end{lemma}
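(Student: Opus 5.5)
The plan is to combine the sign information at the real places with a $2$-adic obstruction. Put $\alpha=D\theta-D+1=1+D(\theta-1)$, and suppose that $\alpha=\varepsilon\beta^2$ with $\varepsilon$ a unit and $\beta\in K_a^\times$.

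\emph{Step 1: signs and norm.} The three conjugates of $\theta-1$ lie in $(-a-1,-a)$, $(-2,-1)$ and $(0,1)$. Since $D\geq4$, the conjugates of $\alpha=1+D(\theta-1)$ have signs $(-,-,+)$. In particular $\Nm_{K_a/\Q}(\alpha)>0$. Because $\beta^2$ is totally positive, $\varepsilon$ has positive norm and the same signature $(-,-,+)$. By Lemma~\ref{lem:units}, $\varepsilon$ is, up to the square of a unit, one of $1,\theta,\theta-1,\theta(\theta-1)$. The classes $1$ and $\theta(\theta-1)$ are totally positive, so they are excluded. Absorbing the unit square into $\beta$, we may assume $\varepsilon\in\{\theta,\theta-1\}$.

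\emph{Step 2: reduction to a congruence mod $4$.} Since $\alpha$ and $\varepsilon$ are units at every prime above $2$, so is $\beta$. By the remark after \eqref{eq:discriminant}, $\mathcal O_{K_a}\otimes\Z_2=\Z_2[\theta]$. Since $4\mid D$, we have $\alpha\equiv1\pmod 4$, and therefore $\varepsilon\equiv\beta^{-2}\pmod{4\mathcal O_{K_a}\otimes\Z_2}$. So $\varepsilon$ must be a square in $R=\Z[\theta]/(4)$. Here $a\equiv2\pmod4$ gives
\[
  f_a(X)\equiv X^3+X^2-2X-1\pmod 4.
\]
Reducing further, $f_a(X)\equiv X^3+X^2+1\pmod 2$, which is irreducible over $\F_2$. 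Hence $R/2R\cong\F_8$ and $R$ is the Galois ring of order $64$.

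\emph{Step 3: the square criterion in $R$.} In $R$, $(b+2c)^2=b^2+4(\cdots)=b^2$, so $b^2$ depends only on $b\bmod 2$. Squaring on $\F_8$ is the Frobenius, which is bijective. For $x\in R^\times$, the only candidate square root mod $2$ is $x^4$, so $x$ is a square in $R$ if and only if $x^8=x$ in $R$. It therefore remains to check that
\[
  \theta^8\not\equiv\theta \quad\text{and}\quad (\theta-1)^8\not\equiv\theta-1 \pmod{(4,\,\theta^3+\theta^2-2\theta-1)}.
\]
This is a finite reduction of powers using $\theta^3\equiv-\theta^2+2\theta+1$. It is the main computational obstacle, and I expect the discrepancy to be $2$ times a nonzero element of $\F_8$ in each case.

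One could organize Step~3 more conceptually via the Teichmüller lift: $x^8=x$ in $R$ iff $x$ is a Teichmüller representative. Then check that $\theta$ and $\theta-1$ differ from their Teichmüller lifts at order $2$. I would nevertheless carry out the direct reduction, since it is short. Once both congruences fail, no $\varepsilon\in\{\theta,\theta-1\}$ is possible, contradicting Step~1. So $\alpha$ is not a unit times a square.
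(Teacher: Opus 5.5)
Your proof is correct and follows the paper's argument: signs eliminate the classes $1$ and $\theta(\theta-1)$, and a squareness test in $\mathcal O_{K_a}/4\cong\Z[\theta]/(4)$ eliminates $\theta$ and $\theta-1$. Your criterion $x^8=x$ is the paper's check in another form, since $x^4$ is the unique square root of $x$ modulo $2$ and the paper squares exactly these lifts, $\theta^2+\theta+1$ and $\theta^2+\theta$. The reduction you deferred comes out as you predicted: $\theta^8\equiv\theta+2$ and $(\theta-1)^8\equiv2\theta^2+3\theta+1\pmod 4$, so the discrepancies are $2$ and $2(\theta^2+\theta+1)$.
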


\begin{proof}
  The odd index gives
  \[
    \mathcal O_{K_a}/4\mathcal O_{K_a}\simeq
    (\Z/4\Z)[X]/(X^3+X^2+2X-1).
  \]
  Its reduction modulo two is $\F_8$. The unique square roots of $\theta$ and $\theta-1$ in $\F_8$ are $\theta^2+\theta+1$ and $\theta^2+\theta$. Lifts of the same residue modulo two have equal squares modulo four, whereas
  \begin{align*}
    (\theta^2+\theta+1)^2 & \equiv\theta+2\not\equiv\theta\pmod4,              \\
    (\theta^2+\theta)^2   & \equiv2\theta^2+3\theta+1\not\equiv\theta-1\pmod4.
  \end{align*}
  The second difference is $2(\theta^2+\theta+1)$, which is nonzero because $1,\theta,\theta^2$ form a $(\Z/4\Z)$-basis of this quotient. Thus neither $\theta$ nor $\theta-1$ is a square modulo four in the maximal order.

  Put $\gamma=D\theta-D+1$. Since $D\geq4$, the root intervals give its signature as $(-,-,+)$. Thus $\Nm_{K_a/\Q}(\gamma)>0$, and $\gamma\equiv1\pmod{4\mathcal O_{K_a}}$. The products $\gamma$ and $\gamma\theta(\theta-1)$ have negative conjugates. The products $\gamma\theta$ and $\gamma(\theta-1)$ are nonsquares modulo four. These tests exclude squares in $K_a$, since a square root of an algebraic integer is integral. If $\gamma=\varepsilon\xi^2$ for a unit $\varepsilon$, taking norms to $\Q$ gives $\Nm_{K_a/\Q}(\varepsilon)=1$. Lemma~\ref{lem:units} would then make one of the four excluded products a square.
\end{proof}

Put $\alpha=4\theta-3$ and $\beta=16\theta-15$. Their norms are
\begin{equation}\label{eq:alpha-beta}
  \Nm_{K_a/\Q}(\alpha)=12a+73,\qquad \Nm_{K_a/\Q}(\beta)=240a+4321.
\end{equation}

\begin{lemma}\label{lem:four-classes}
  If $a\geq3$ and $a\equiv30$ or $34\pmod{52}$, then the four classes $[\theta]$, $[\theta-1]$, $[\alpha]$, and $[\beta]$ are independent in $K_a^\times/K_a^{\times2}$.
\end{lemma}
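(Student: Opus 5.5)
The plan is to produce enough quadratic characters on $K_a^\times/K_a^{\times2}$ to separate the sixteen products $\theta^{e_1}(\theta-1)^{e_2}\alpha^{e_3}\beta^{e_4}$. Concretely, I would find four homomorphisms $\chi_i\colon \langle\theta,\theta-1,\alpha,\beta\rangle\to\{\pm1\}$, each trivial on squares, such that the $4\times4$ matrix $(\chi_i(\gamma_j))$ over $\F_2$ is invertible. A nontrivial relation $\prod\gamma_j^{e_j}\in K_a^{\times2}$ would then contradict the invertibility. The available characters are of three kinds.

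\begin{enumerate}[label=(\roman*)]
\item \emph{Signs at the real embeddings.} From the root intervals, all four elements $\theta,\theta-1,\alpha,\beta$ have signature $(-,-,+)$, just as in the proof of Lemma~\ref{lem:obstruction}. So the sign at the first embedding gives the character $e_1+e_2+e_3+e_4 \pmod 2$.
\item \emph{The $2$-adic test modulo $4$.} Since $\mathcal O_{K_a}\otimes\Z_2=\Z_2[\theta]$ and $a\equiv2\pmod4$, there is a homomorphism $K_a^\times\cap\mathcal O_{K_a,(2)}^\times\to(\mathcal O_{K_a}/4)^\times/((\mathcal O_{K_a}/4)^\times)^2$. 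Here $\alpha\equiv\beta\equiv1\pmod4$, so this character sees only $\theta$ and $\theta-1$. The computation in Lemma~\ref{lem:obstruction} shows that $\theta$ and $\theta-1$ are nonsquares modulo $4$. I would also check directly that $\theta(\theta-1)$ is a nonsquare modulo $4$, using the explicit ring $(\Z/4\Z)[X]/(X^3+X^2+2X-1)$, whose unit group modulo squares is a small $\F_2$-space. The images of $[\theta]$ and $[\theta-1]$ are then independent, which gives two characters killing $\alpha$ and $\beta$ and detecting $e_1,e_2$.
\item \emph{Legendre symbols at primes above $13$.} The hypothesis $a\equiv30,34\pmod{52}$ means $a\equiv2\pmod4$ together with $a\equiv4$ or $8\pmod{13}$. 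One checks that $12a+73\equiv4$ and $240a+4321\equiv3$ or $1\pmod{13}$, so $13$ is prime to $\alpha$ and $\beta$ by \eqref{eq:alpha-beta}. I would factor $f_a \bmod 13$ in the two cases $a\equiv4$ and $a\equiv8$. I expect $f_a$ to split, or to have a linear factor, giving a degree-one prime $\mathfrak q\mid13$ with $\theta\equiv r\pmod{\mathfrak q}$ for some $r\in\F_{13}$. The character $\gamma\mapsto\left(\frac{\gamma \bmod\mathfrak q}{\mathfrak q}\right)$ then sends $\alpha,\beta$ to $\left(\frac{4r-3}{13}\right)$ and $\left(\frac{16r-15}{13}\right)$.
\end{enumerate}

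The matrix is block triangular once the $2$-adic characters are placed first: in the columns for $\alpha,\beta$ those two rows vanish. Invertibility therefore reduces to the lower-right $2\times2$ block, namely the restrictions to $\langle\alpha,\beta\rangle$ of the sign character and of one or two $13$-adic characters. The sign character is $(1,1)$ on $(\alpha,\beta)$. It suffices to find a prime $\mathfrak q\mid13$ whose Legendre symbols on $(\alpha,\beta)$ are not $(1,1)$ and not $(0,0)$ additively, i.e. $\alpha$ and $\beta$ are not both squares and not both nonsquares modulo $\mathfrak q$. Equivalently, $\alpha\beta$, or a suitable combination, must be a nonsquare at $\mathfrak q$. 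If one prime does not suffice, I would use two different primes above $13$, or a degree-two prime with the quadratic character of $\F_{169}$, choosing whichever separates $[\alpha]$ from $[\beta]$ and from $[\alpha\beta]$.

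The main obstacle is this explicit mod-$13$ step. It requires factoring $f_a$ modulo $13$ for $a\equiv4$ and $a\equiv8$ separately, identifying the roots $r$, and evaluating the quadratic residue symbols of $4r-3$ and $16r-15$. One must confirm that in each case some prime above $13$ gives a character whose value on $(\alpha,\beta)$ is $(-1,1)$ or $(1,-1)$. Presumably the modulus $52$ in the hypothesis was chosen exactly to guarantee this. With that character in hand, the four characters (first sign, two $2$-adic, one $13$-adic) give an invertible matrix, which proves the independence.
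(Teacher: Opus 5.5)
\paragraph{The gap is in step (ii).} The check you defer, that $\theta(\theta-1)$ is a nonsquare modulo $4$, fails. In $\mathcal O_{K_a}/4\mathcal O_{K_a}$ we have $\theta^3=-\theta^2-2\theta+1$, hence $\theta^4=-\theta^2+3\theta-1$ and
\[
  (\theta^2+1)^2=\theta^4+2\theta^2+1\equiv\theta^2-\theta=\theta(\theta-1)\pmod 4 .
\]
This is exactly the congruence used in Corollary~\ref{cor:hilbert}. So $[\theta]$ and $[\theta-1]$ have the same nonzero image in $(\mathcal O_{K_a}/4)^\times/((\mathcal O_{K_a}/4)^\times)^2$. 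The dyadic test gives only the single character $e_1+e_2$, and your block for $(\theta,\theta-1)$ is singular.

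No choice of characters from your list repairs this when $a\equiv30\pmod{52}$:
\begin{itemize}
\item $\theta(\theta-1)$ is totally positive.
\item It is even a square in $K_a\otimes\Q_2$. Its squareclass is that of $\epsilon=\theta/(\theta-1)$, which is a global square for $a=2$ and $a=-2$, and these two values cover both classes of $a\bmod 8$ that determine $\epsilon\bmod 8$.
\item It is a square at both primes above $13$. It is $\equiv4$ at $(13,\theta-6)$, and at the prime with residue field $\F_{169}$ its norm to $\F_{13}$ is $11\cdot8\equiv6^2$.
\end{itemize}
Hence every character you propose takes the same value on $\theta$ and on $\theta-1$. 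The corresponding columns coincide, and no $4\times4$ matrix built from these characters is invertible. This is structural: $K_a(\sqrt{\theta(\theta-1)})/K_a$ is unramified, so its nontriviality is a global fact, not visible to signs or to any dyadic test.

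\paragraph{The missing ingredient is Lemma~\ref{lem:units}.} Its global argument via $\epsilon$ and the system for $(s,w)$ gives the independence of $[\theta]$ and $[\theta-1]$, and the paper uses it for exactly this step. With it, your sign and mod-$4$ characters leave only $\alpha\beta$ and $\alpha\beta\theta(\theta-1)$, and both must be shown to be nonsquares at primes above $13$.

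For $a\equiv4\pmod{13}$, the prime $(13,\theta-6)$ handles both, since $\alpha\beta\equiv11$ and $\theta(\theta-1)\equiv4$ there.

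For $a\equiv8\pmod{13}$, your claim $12a+73\equiv4$ is wrong. In fact $12a+73\equiv8-a\equiv0\pmod{13}$, so $\alpha$ lies in the prime with $\theta\equiv4$. Neither remaining prime suffices alone:
\begin{itemize}
\item at $\theta\equiv3$, $\alpha\beta\equiv11$ but $\alpha\beta\theta(\theta-1)\equiv1$;
\item at $\theta\equiv12$, $\alpha\beta\theta(\theta-1)\equiv5$ but $\alpha\beta\equiv9$.
\end{itemize}
So you need both primes, as the paper does. In this case your purely local scheme can in fact be rescued, since $\theta(\theta-1)\equiv6$ is a nonsquare at $\theta\equiv3$. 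The genuine obstruction is the case $a\equiv30\pmod{52}$.
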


\begin{proof}
  Both congruences give $a\equiv2\pmod4$. Lemma~\ref{lem:units} excludes the three nontrivial products of the unit classes, and Lemma~\ref{lem:obstruction} excludes the eight products involving exactly one of $\alpha,\beta$. The products $\alpha\beta\theta$ and $\alpha\beta(\theta-1)$ have negative conjugates. It remains to exclude $\alpha\beta$ and $\alpha\beta\theta(\theta-1)$.

  If $a\equiv30\pmod{52}$, then $a\equiv4\pmod{13}$ and
  \[
    f_a(X)\equiv(X-6)(X^2+9X+11)\pmod{13},\qquad d_a\equiv8\pmod{13}.
  \]
  The prime $\mathfrak l=(13,\theta-6)$ of the maximal order has residue field $\F_{13}$. At this prime,
  \[
    \alpha\beta\equiv11,\qquad \theta(\theta-1)\equiv4.
  \]
  The two remaining products reduce to $11$ and $5$, both nonsquares in $\F_{13}$.

  If $a\equiv34\pmod{52}$, then $a\equiv8\pmod{13}$ and
  \[
    f_a(X)\equiv(X-3)(X-4)(X-12)\pmod{13}.
  \]
  These distinct roots define degree-one primes of the maximal order. At the prime with $\theta\equiv3$, we have $\alpha\beta\equiv11$. At the prime with $\theta\equiv12$, we have $\alpha\beta\theta(\theta-1)\equiv5$. Both residues are nonsquares. Thus all fifteen nontrivial products are excluded in either case.
\end{proof}

\section{Prime ideal classes of prescribed order}

We first record the local step that allows us to work in the maximal order without assuming $\mathcal O_{K_a}=\Z[\theta]$.

\begin{lemma}\label{lem:local-power}
  Let $f\in\Z[X]$ be monic and irreducible, let $K=\Q(\theta)$ with $f(\theta)=0$, and let $D,c,m,N$ be integers with $D,m>0$, $N\geq1$, and $\gcd(D,m)=1$. If $\Nm_{K/\Q}(D\theta-c)=m^N$ and $f'(c/D)$ is a $p$-adic unit for every prime $p\mid m$, then
  \[
    I=(m,D\theta-c)\mathcal O_K
    \quad\text{satisfies}\quad \Nm I=m,\qquad I^N=(D\theta-c).
  \]
  If $m$ is a rational prime, $I$ is prime.
\end{lemma}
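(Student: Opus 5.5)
The statement is local at the primes dividing $m$, so I would work prime by prime. Put $\gamma=D\theta-c$ and $I=(m,\gamma)\mathcal O_K$. Fix a prime $p\mid m$. Since $\gcd(D,m)=1$, the integer $D$ is a $p$-adic unit, and we may write $\gamma=D(\theta-r)$ with $r=c/D\in\Z_{(p)}$. Then $\Nm_{K/\Q}(\gamma)=D^n\Nm(\theta-r)=\pm D^n f(r)$, where $n=\deg f$. So $v_p(f(r))=Nv_p(m)>0$, and $r$ is a root of $f$ modulo $p$.

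Because $f'(r)$ is a $p$-adic unit, the root $\bar r$ of $\bar f$ is simple, and Hensel's lemma lifts $r$ to a root $\rho\in\Z_p$ of $f$. Since the root is simple, the order $\Z[\theta]$ is $p$-maximal at the corresponding factor. Concretely, $f=(X-\rho)g$ over $\Z_p$ with $g(\rho)=f'(\rho)$ a unit, so $\bar g$ and $X-\bar r$ are coprime. Hence
\[
\mathcal O_K\otimes\Z_p\simeq\Z_p\times B,
\]
where the first factor corresponds to $\theta\mapsto\rho$. This uses the Dedekind criterion or Hensel factorization: a coprime factorization of $f$ over $\Z_p$ gives a product decomposition of $\Z_p[X]/(f)$, and the factor $\Z_p[X]/(X-\rho)=\Z_p$ is already maximal. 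So there is a unique prime $\mathfrak p$ above $p$ with residue degree one where $\theta\equiv r$, and it is unramified.

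Next I would compute the valuations of $\gamma$ locally. In the $\Z_p$ component, $\gamma\mapsto D(\rho-r)$, and its valuation is $v_p(f(r))-v_p(g(r))=Nv_p(m)$, since $g(r)\equiv g(\rho)$ is a unit. In the $B$ component, $\gamma=D(\theta-r)$ is a unit: $\bar g(\bar r)\ne0$ means $\bar r$ is not a root of $\bar g$, so $\theta-r$ is invertible modulo every prime of $B$. Thus $(\gamma)$ has $p$-part exactly $\mathfrak p^{Nv_p(m)}$. The ideal $(m,\gamma)$ has $p$-part $\mathfrak p^{\min(v_p(m),Nv_p(m))}=\mathfrak p^{v_p(m)}$, because $m$ has valuation $v_p(m)$ at every prime above $p$ and $\gamma$ is a unit at the others. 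Primes not dividing $m$ contribute nothing to $I$. Since $\Nm(\gamma)=m^N$, the ideal $(\gamma)$ is supported only on primes above divisors of $m$.

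Assembling over $p\mid m$ gives $I=\prod_{p\mid m}\mathfrak p_p^{v_p(m)}$ and $(\gamma)=\prod_{p\mid m}\mathfrak p_p^{Nv_p(m)}=I^N$. Each $\mathfrak p_p$ has norm $p$, so $\Nm I=\prod_{p\mid m} p^{v_p(m)}=m$. If $m$ is prime, $I=\mathfrak p_m$ is prime.

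The main obstacle is the decomposition step, which must hold without assuming $\mathcal O_K=\Z[\theta]$. The simple-root hypothesis handles it: since $\Z_p[\theta]$ splits as $\Z_p\times B'$ and $\mathcal O_K\otimes\Z_p$ is the normalization, the normalization respects the product decomposition. So the first factor stays $\Z_p$, while $B$, the normalization of $B'$, may be larger, which does not matter for the argument.
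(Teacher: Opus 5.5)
Your proof is correct and follows the paper's argument. Both use the Hensel factorization $f=(X-\rho)g$ over $\Z_p$, the product decomposition of the $p$-adic integral closure with first factor $\Z_p$, invertibility of $D\theta-c$ in the complementary factor, and the ideal sum with $(m)$, which cuts each exponent from $Nv_p(m)$ to $v_p(m)$. The differences are cosmetic: you read off $v_{\mathfrak p}(\gamma)=Nv_p(m)$ directly from $v_p(\rho-r)$, whereas the paper gets it from the norm. Also, $m$ has valuation $e_{\mathfrak q}v_p(m)$, not $v_p(m)$, at a prime $\mathfrak q\mid p$; this is harmless because only $\mathfrak p$, with $e=1$, contributes.
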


\begin{proof}
  Fix $p\mid m$. The norm identity implies $f(c/D)\equiv0\pmod p$. The derivative hypothesis and Hensel's lemma give $f(X)=(X-r)h(X)$ in $\Z_p[X]$, with $r\equiv c/D\pmod p$ and $h(r)\in\Z_p^\times$. The coprime factors give $\Z_p[X]/(f)\simeq\Z_p\times\Z_p[X]/(h)$. Taking integral closures in $K\otimes_{\Q}\Q_p$ preserves this product. The first factor is already integrally closed and equals $\Z_p$, so the corresponding prime $\mathfrak p_p$ of $K$ has ramification index and residue degree both one. In the complementary factor, $DX-c$ is a unit modulo $p$ because $h(c/D)$ is a $p$-adic unit. Its image is therefore a unit in the integral closure. Thus $\mathfrak p_p$ is the only prime above $p$ that divides $(D\theta-c)$.

  The norm identity excludes prime divisors of $(D\theta-c)$ above rational primes not dividing $m$. Comparing valuations of the norm therefore gives
  \[
    (D\theta-c)=\prod_{p\mid m}\mathfrak p_p^{Nv_p(m)}.
  \]
  Taking the ideal sum with $(m)$ replaces each exponent by $v_p(m)$. This proves both identities and the last assertion.
\end{proof}

For later use, direct evaluation gives
\begin{equation}\label{eq:general-norm}
  \Nm_{K_a/\Q}(D\theta-D+1)=D(D-1)a+D^3+D^2-2D+1.
\end{equation}

\begin{proof}[Proof of Theorem~\ref{thm:prescribed-prime}]
  First let $\ell\ne137$ and $D=4$. Because $N$ is divisible by four and $\ell$ is coprime to six, $\ell^N\equiv1\pmod{48}$. Thus $a=(\ell^N-73)/12$ is an integer with $a\geq46$ and $a\equiv2\pmod4$. Equations~\eqref{eq:alpha-beta} and
  \[
    f_a'(3/4)=\frac{2\ell^N-137}{48}
  \]
  give the hypotheses of Lemma~\ref{lem:local-power}. Hence the indicated ideal $\mathfrak p$ has norm $\ell$ and satisfies $\mathfrak p^N=(4\theta-3)$.

  For $\ell=137$ and $D=24$, the congruence $137^4\equiv1\pmod{2208}$ gives
  \[
    a=(137^N-14353)/552\in\Z,\qquad a\equiv2\pmod4,
  \]
  and $a\geq638154$. Equation~\eqref{eq:general-norm} and differentiation give
  \[
    \Nm_{K_a/\Q}(24\theta-23)=137^N,\qquad
    f_a'(23/24)=\frac{22\cdot137^N-304657}{13248}.
  \]
  The denominator is prime to $137$, and $304657\equiv106\pmod{137}$. Lemma~\ref{lem:local-power} again gives a prime of norm $137$ with the required power identity.

  In either case, $[\mathfrak p]^N=1$. If its order were a proper divisor of the power of two $N$, then $\mathfrak p^{N/2}$ would be principal. Writing $\mathfrak p^{N/2}=(\xi)$ would make $D\theta-D+1$ a unit times $\xi^2$, contrary to Lemma~\ref{lem:obstruction}. Hence $[\mathfrak p]$ has exact order $N$. Finally, a finite set of number fields has class groups of bounded exponent. The unbounded orders $2^n$ prove the assertion about field isomorphism classes.
\end{proof}

For comparison, suppose that $\ell\ne137$ and $\mathcal O_{K_a}=\Z[\theta]$. If $\mathfrak p^{N/2}=(\gamma)$, then $\gamma$ is integral and
\[
  |\Nm_{K_a/\Q}(\gamma)|=\ell^{N/2}=\sqrt{12a+73}<2a-3,
\]
since $a\geq46$ and $(2a-3)^2-(12a+73)=4(a-8)(a+2)>0$. The norm bound in \cite{KalaSgallovaTinkova2025} forces $\gamma$ to be a unit times a rational integer. Its absolute norm would then be a cube, contrary to $3\nmid N/2=2^{n-1}$.

Louboutin's bound \cite[Theorem 10]{Louboutin2001} gives the same deduction for $a\geq59$, with his parameter $m=a+1$ and bound $2m-5=2a-3$. The identities $\rho=1/(\theta-1)=\theta(\theta+a)$ and $\theta=\rho^2-(a+1)\rho-(a+1)$ give $\Z[\rho]=\Z[\theta]$, so the maximality hypothesis agrees for the two generators $\rho$ and $\theta$.

\begin{corollary}\label{cor:composite}
  Suppose $n\geq2$, $N=2^n$, and $m\geq5$ with $\gcd(m,6\cdot137)=1$. If $a=(m^N-73)/12$, then the ideal $I=(m,4\theta-3)\mathcal O_{K_a}$ has norm $m$, satisfies $I^N=(4\theta-3)$, and has exact class order $N$.
\end{corollary}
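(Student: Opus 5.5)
The argument is the $D=4$ case of the proof of Theorem~\ref{thm:prescribed-prime}, with the prime $\ell$ replaced by the composite modulus $m$. The only new work is the $p$-adic derivative check at each prime $p\mid m$, done through Lemma~\ref{lem:local-power}.

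\emph{Integrality and the congruence.} Since $\gcd(m,6)=1$ and $4\mid N$, we have $m^N\equiv1\pmod{48}$, exactly as in the prime case. Hence $a=(m^N-73)/12$ is an integer with $a\equiv2\pmod4$. From $m\geq5$ and $N\geq4$ we get $m^N\geq625$, so $a\geq46\geq3$.

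\emph{Norm and power identities.} By \eqref{eq:alpha-beta}, $\Nm_{K_a/\Q}(4\theta-3)=12a+73=m^N$. We apply Lemma~\ref{lem:local-power} with $D=4$ and $c=3$; note $\gcd(4,m)=1$. We need $f_a'(3/4)$ to be a $p$-adic unit for every prime $p\mid m$. Substituting the value of $a$ gives
\[
  f_a'(3/4)=\frac{2m^N-137}{48}.
\]
For $p\mid m$ the denominator is a $p$-adic unit, because $p\nmid 6$. The numerator satisfies $2m^N-137\equiv-137\pmod p$, which is nonzero modulo $p$ because $137\nmid m$ and $137$ is prime. This is the one place the hypothesis $\gcd(m,137)=1$ enters, and it is the main point to check. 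Lemma~\ref{lem:local-power} then gives $\Nm I=m$ and $I^N=(4\theta-3)$. In this corollary $I$ need not be prime; it is a product of degree-one primes.

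\emph{Exact order.} We have $[I]^N=1$, and $N$ is a power of two, so it suffices to show that $I^{N/2}$ is not principal. Suppose $I^{N/2}=(\xi)$. Then $(4\theta-3)=(\xi^2)$, so $4\theta-3$ is a unit times $\xi^2$. This contradicts Lemma~\ref{lem:obstruction} with $D=4$, which applies because $a\equiv2\pmod4$. Hence $[I]$ has exact order $N$.
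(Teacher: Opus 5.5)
Your proposal is correct and follows the paper's own proof: the congruence $m^N\equiv1\pmod{48}$, the derivative $f_a'(3/4)=(2m^N-137)/48$ checked at each prime $p\mid m$ to apply Lemma~\ref{lem:local-power}, and the exact-order argument via Lemma~\ref{lem:obstruction}. You also correctly identify the one place where $\gcd(m,137)=1$ is used.
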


\begin{proof}
  The fourth-power congruence modulo $48$ holds for this $m$, and the derivative calculation in the first part of the preceding proof applies at every prime divisor of $m$. Apply Lemmas~\ref{lem:local-power} and \ref{lem:obstruction}, and the same exact-order argument.
\end{proof}

\begin{remark}\label{rem:137}
  The first formula cannot be extended to $\ell=137$ with the same ideal. Put $q=137^N$, $a=(q-73)/12$, and $\alpha=4\theta-3$. The minimal polynomial of $\alpha$ is
  \[
    Z^3+\frac{q-58}{3}Z^2+\frac{2q-137}{3}Z-q.
  \]
  Write this polynomial as $G(Z)$ and put $p=137$. The following three rescaled polynomials have reductions
  \begin{align*}
    G(p^{N-1}Z)/p^N & \equiv -Z/3-1,              \\
    G(pZ)/p^2       & \equiv -Z(58Z+1)/3,         \\
    G(Z)            & \equiv Z^2(Z-58/3) \pmod p.
  \end{align*}
  Their respective nonzero roots $-3$, $-1/58$, and $58/3$ are simple.

  Hensel's lemma gives three roots of $G$ in $\Q_{137}$ of valuations $N-1$, $1$, and $0$. Thus $137$ splits completely, $(\alpha)=\mathfrak p^{N-1}\mathfrak q$ for two distinct primes of norm $137$, and $(137,\alpha)$ has norm $137^2$.

  For the second parameter formula $a=(q-14353)/552$ in Theorem~\ref{thm:prescribed-prime}, the form $24\theta-23$ avoids this obstruction. At $x=-23/24$, its norm identity gives $y^2=137^N/24^3$, which is not a rational square. Thus it does not give a rational point on \eqref{eq:elliptic} by the denominator-clearing construction used for $4\theta-3$.
\end{remark}

\begin{proposition}\label{prop:nonmaximal}
  Let $n\geq2$ and $a_n=(5^{2^n}-73)/12$. If $n\equiv13\pmod{110}$, then
  \[
    23\mid[\mathcal O_{K_{a_n}}:\Z[\theta]].
  \]
  These parameters include infinitely many isomorphism classes of fields.
\end{proposition}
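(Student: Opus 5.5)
The plan is to use Dedekind's index criterion at the prime $23$, after reducing everything to a finite computation modulo $23^2=529$.

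\textbf{Step 1: $a_n \bmod 529$ depends only on $n \bmod 110$.} The value $a_n=(5^{2^n}-73)/12 \bmod 529$ is determined by $5^{2^n}\bmod 529$, since $12$ is invertible modulo $529$. The multiplicative order of $5$ modulo $529$ divides $\varphi(529)=506=2\cdot 11\cdot 23$. So $5^{2^n}\bmod 529$ depends only on $2^n \bmod 506$. For $n\geq1$, the class of $2^n$ modulo $2$ is fixed. Modulo $11$ and modulo $23$, the order of $2$ is $10$ and $11$ respectively. Hence $2^n\bmod 506$, and therefore $a_n\bmod 529$, depends only on $n\bmod \operatorname{lcm}(10,11)=110$. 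It thus suffices to compute a single residue $a^\ast\equiv a_{13}\pmod{529}$. For this I will evaluate $2^{13}=8192$ modulo $506$ and then $5^{(8192\bmod 506)}$ modulo $529$ by repeated squaring. This is routine arithmetic that I will not grind through here.

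\textbf{Step 2: a double root of $f_{a^\ast}$ modulo $23$.} With $a^\ast\bmod 23$ in hand, I will factor $f_{a^\ast}(X)=X(X-1)(X+a^\ast)-1$ modulo $23$. The expectation is a factorization $(X-r)^2(X-s)$ with a double root $r$. Equivalently, $23\mid d_{a^\ast}=(a^{\ast2}+3a^\ast-1)^2-32$. This is consistent with $32\equiv 9=3^2\pmod{23}$, so it requires $a^{\ast2}+3a^\ast-1\equiv\pm3\pmod{23}$; I will check which branch occurs. The root $r$ is found as a common root of $f_{a^\ast}$ and $f'_{a^\ast}$ modulo $23$.

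\textbf{Step 3: Dedekind's criterion.} For a double root $r$ modulo $p=23$, Dedekind's criterion says the following. We have $p\mid[\mathcal O_K:\Z[\theta]]$ if and only if $p^2\mid f(\tilde r)$ for a lift $\tilde r\in\Z$ of $r$. The choice of lift is irrelevant, because $f(\tilde r+pt)\equiv f(\tilde r)+pt\,f'(\tilde r)\equiv f(\tilde r)\pmod{p^2}$, using $f'(\tilde r)\equiv0\pmod p$. Moreover, $f_a(\tilde r)\bmod 529$ depends only on $a\bmod 529$, which Step 1 pins down. So the claim reduces to the single congruence $f_{a^\ast}(\tilde r)\equiv0\pmod{529}$, to be verified directly. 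I expect this verification to be the crux: the modulus $110$ in the hypothesis was presumably chosen precisely so that this $23$-adic condition holds. If I instead find a simple root modulo $23$, or a nonzero value modulo $529$, I would recheck the residue from Step 1 before anything else.

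\textbf{Step 4: infinitely many fields.} The parameters with $n\equiv13\pmod{110}$ are an infinite subset of those in Theorem~\ref{thm:prescribed-prime} with $\ell=5$. That theorem gives an ideal class of exact order $2^n$ in $\Cl(K_{a_n})$. A finite set of fields has class groups of bounded exponent, while these orders are unbounded. Hence the fields $K_{a_n}$ fall into infinitely many isomorphism classes, exactly as at the end of the proof of Theorem~\ref{thm:prescribed-prime}.
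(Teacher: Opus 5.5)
Your proposal is correct, and the computations you deferred come out as you expect. We have $2^{13}\equiv 96\pmod{506}$ and $5^{16}\equiv 3\pmod{529}$, hence $5^{96}\equiv 81\cdot 9\equiv 200\pmod{529}$. This gives $a^\ast\equiv 127\cdot 12^{-1}\equiv 231\pmod{529}$, so $a^\ast\equiv 1\pmod{23}$. Then $f_{a^\ast}\equiv X^3-X-1\equiv (X-10)^2(X-3)\pmod{23}$, on the branch $a^{\ast2}+3a^\ast-1\equiv 3$. Finally $f_{231}(10)=899+90\cdot 231=21689=41\cdot 529$. These numbers must actually be written into the argument for it to be a proof, since they are its whole content.

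Your Steps 1 and 4 match the paper. The paper works modulo $6348=12\cdot 529$ instead of inverting $12$ modulo $529$, which is immaterial.

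The genuine difference is in Steps 2--3. You invoke Dedekind's criterion and reduce to one congruence modulo $529$. The paper instead writes $a_n=231+529k$ and exhibits $\xi=(\theta^2-13\theta+30)/23=(\theta-10)(\theta-3)/23$. It proves $\xi$ integral by computing its monic cubic, with coefficients polynomial in $k$, and observes that $\xi\notin\Z[\theta]$ while $23\xi\in\Z[\theta]$. This $\xi$ is exactly the witness the proof of Dedekind's criterion builds from your factorization $(X-10)^2(X-3)$, so the two arguments are closely related.

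Your route costs only one congruence plus a standard theorem. It also yields an ``if and only if'', identifying exactly which residues of $a$ modulo $529$ force $23$ into the index. The paper's route is self-contained and produces an explicit element of $\mathcal O_{K_{a_n}}\setminus\Z[\theta]$, at the price of a heavier symbolic verification.
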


\begin{proof}
  The modular identities
  \[
    5^{506}\equiv1\pmod{6348},\quad
    2^{110}\equiv1\pmod{253},\quad
    5^{2^{13}}\equiv2845\pmod{6348}
  \]
  give $a_n\equiv231\pmod{529}$. Indeed, $2^n\equiv2^{13}\pmod{253}$, and both exponents are even, giving the congruence modulo $506$. Write $a_n=231+529k$. Substitution using $f_{a_n}(\theta)=0$ shows that $\xi=(\theta^2-13\theta+30)/23$ satisfies
  \begin{align*}
    \xi^3 & -(12167k^2+10925k+2454)\xi^2                    \\
          & +(25392k^2+22802k+5119)\xi-(138k+61)(90k+41)=0.
  \end{align*}
  This monic equation makes $\xi$ integral, while the basis $1,\theta,\theta^2$ shows that $\xi\notin\Z[\theta]$ and $23\xi\in\Z[\theta]$. This proves the index assertion. Theorem~\ref{thm:prescribed-prime} gives unbounded class orders along the progression, so finitely many fields cannot account for it.
\end{proof}

\section{Four points and two ideal classes}

\begin{lemma}\label{lem:virtual-units}
  Let $a\geq3$, and let
  \[
    E_{K_{a}}^{(1)}=\ker(\Nm_{{K_{a}}/\Q}:\mathcal O_{K_{a}}^\times\to\{\pm1\})
  \]
  be its group of norm-one units. Define
  \[
    V_{K_{a}}=\{\gamma\in {K_{a}}^\times:\Nm_{{K_{a}}/\Q}(\gamma)>0,\ v_{\mathfrak p}(\gamma)\in2\Z\ \text{for every finite prime }\mathfrak p\}/{K_{a}}^{\times2}.
  \]
  Then there is an exact sequence
  \[
    1\longrightarrow E_{K_{a}}^{(1)}/(\mathcal O_{K_{a}}^\times)^2
    \longrightarrow V_{K_{a}}\longrightarrow\Cl({K_{a}})[2]\longrightarrow1.
  \]
  The kernel of $V_{K_{a}}\to\Cl({K_{a}})[2]$ has dimension two over $\F_2$, with basis $[\theta],[\theta-1]$.
\end{lemma}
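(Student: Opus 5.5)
Define the map $V_{K_a}\to\Cl(K_a)[2]$ as follows. For $\gamma$ with all finite valuations even, the fractional ideal $(\gamma)$ is the square of a unique fractional ideal $\mathfrak c_\gamma$, by unique factorization of ideals. Send $[\gamma]$ to $[\mathfrak c_\gamma]$.

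\emph{Well-definedness.} Since $\mathfrak c_\gamma^2=(\gamma)$ is principal, $[\mathfrak c_\gamma]$ lies in $\Cl(K_a)[2]$. Replacing $\gamma$ by $\gamma\delta^2$ replaces $\mathfrak c_\gamma$ by $\mathfrak c_\gamma(\delta)$, so the class is unchanged. The map is a homomorphism because $\mathfrak c_{\gamma\gamma'}=\mathfrak c_\gamma\mathfrak c_{\gamma'}$.

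\emph{Surjectivity.} Let $\mathfrak c^2=(\gamma)$. Since $\Nm_{K_a/\Q}(-1)=-1$, I may replace $\gamma$ by $-\gamma$ to make its norm positive. This does not change $(\gamma)$, so $[\gamma]\in V_{K_a}$ maps to $[\mathfrak c]$. The sign freedom is exactly why the norm condition appears in the definition of $V_{K_a}$.

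\emph{Kernel.} If $\mathfrak c_\gamma=(\xi)$ is principal, then $(\gamma)=(\xi^2)$, so $\gamma=\varepsilon\xi^2$ for a unit $\varepsilon$. Taking norms gives $\Nm_{K_a/\Q}(\varepsilon)=\Nm_{K_a/\Q}(\gamma)\Nm_{K_a/\Q}(\xi)^{-2}>0$, so $\varepsilon\in E^{(1)}_{K_a}$, and $[\gamma]=[\varepsilon]$. Conversely, every norm-one unit gives an element of $V_{K_a}$ with $\mathfrak c=(1)$. So the kernel is the image of $E^{(1)}_{K_a}\to V_{K_a}$.

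\emph{Injectivity of the left map.} The map $E^{(1)}_{K_a}/(\mathcal O_{K_a}^\times)^2\to K_a^\times/K_a^{\times2}$ is injective. If a norm-one unit $\varepsilon$ equals $\eta^2$ with $\eta\in K_a^\times$, then $\eta$ is integral with all valuations zero, hence a unit. This is the remark already used in Lemma~\ref{lem:units}. There is a subtlety: the source is quotiented by squares of all units, not only squares of norm-one units. These agree in the quotient, because $(-u)^2=u^2$ and $-u$ has norm $-\Nm_{K_a/\Q}(u)$, so every unit square is the square of a norm-one unit.

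\emph{Basis of the kernel.} Lemma~\ref{lem:units} gives that $E^{(1)}_{K_a}$ modulo unit squares consists of exactly the four distinct classes $1,\theta,\theta-1,\theta(\theta-1)$. The kernel is therefore $\F_2$-dimension two with basis $[\theta],[\theta-1]$.

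I expect no serious obstacle. The only point needing care is the bookkeeping between norm-one units and all units modulo squares, together with the sign adjustment via $-1$ in the surjectivity step; everything else follows from unique factorization of ideals and Lemma~\ref{lem:units}.
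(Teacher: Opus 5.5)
Your proof is correct and follows the paper's argument step by step. The steps are the same: the unique ideal $\mathfrak c$ with $\mathfrak c^2=(\gamma)$ from the even valuations, the sign change using $\Nm_{K_a/\Q}(-1)=-1$ for surjectivity, the kernel identified with norm-one units, injectivity because a square root of a unit is a unit, and Lemma~\ref{lem:units} for the basis $[\theta],[\theta-1]$. Your additional checks, that the map is a homomorphism and that $(\mathcal O_{K_a}^\times)^2$ coincides with the squares of norm-one units, are correct details that the paper leaves implicit.
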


\begin{proof}
  The even valuations determine a unique fractional ideal $\mathfrak c$ with $(\gamma)=\mathfrak c^2$. Replacing $\gamma$ by a square multiple changes $\mathfrak c$ by a principal ideal, so the map is well defined. Its kernel consists of the classes of norm-one units: if $\mathfrak c=(\xi)$, then $\gamma/\xi^2$ is such a unit. Conversely, each norm-one unit represents an element of the kernel. A square root in ${K_{a}}$ of a unit is a unit, so the left map is injective. If $[\mathfrak c]\in\Cl({K_{a}})[2]$, write $\mathfrak c^2=(\gamma)$; replacing $\gamma$ by $-\gamma$ when necessary makes its norm positive because $[{K_{a}}:\Q]=3$. This proves surjectivity. The kernel assertion follows from Lemma~\ref{lem:units}.
\end{proof}

\begin{proof}[Proof of Theorem~\ref{thm:four-points}]
  Define
  \[
    \delta:E_a(\Q)\longrightarrow K_a^\times/K_a^{\times2},
    \qquad\delta(O)=1,\quad\delta((x,y))=[x+\theta].
  \]
  Irreducibility ensures that $x+\theta\ne0$. The map is a homomorphism: if a nonvertical line $y=mX+b$ meets the cubic at abscissae $x_1,x_2,x_3$, with multiplicity, evaluation of the monic intersection identity at $X=-\theta$ gives
  \[
    (x_1+\theta)(x_2+\theta)(x_3+\theta)=(b-m\theta)^2.
  \]
  The chord-and-tangent law and the vertical-line case prove the assertion. In particular, $\delta$ annihilates doubles. There is no rational two-torsion because the defining cubic is irreducible. The rational torsion group has odd order, so $\delta$ also annihilates torsion.

  The square conditions verify that $S,T$ lie on the curve. The images of $P,Q,S,T$ are $[\theta]$, $[\theta-1]$, $[\alpha]$, and $[\beta]$, independent by Lemma~\ref{lem:four-classes}. Suppose that an integral combination of these points, with coefficients not all zero, is torsion. Let $d$ be the greatest common divisor of the coefficients, and let $R$ be the combination obtained by dividing them by $d$. Since $dR$ is torsion, $R$ is torsion as well. The resulting relation is therefore primitive. Applying $\delta$ forces every coefficient to be even, contradicting primitivity. Thus the four points are independent modulo torsion.

  Now suppose that $2R=bP+cQ+eS+fT$. Applying $\delta$ makes all four coefficients even. Subtracting $(b/2)P+(c/2)Q+(e/2)S+(f/2)T$ from $R$ leaves rational two-torsion and therefore gives $O$. This proves saturation at two.

  For the ideals, the square conditions give $\gcd(u,6)=1$ and $\gcd(v,30)=1$. The derivatives are
  \[
    f_a'(3/4)=\frac{2u^2-137}{48},\qquad
    f_a'(15/16)=\frac{14v^2-57569}{3840},
    \quad 57569=23\cdot2503.
  \]
  The additional coprimality conditions and Lemma~\ref{lem:local-power}, with exponent two, prove the norm and square identities for $\mathfrak a,\mathfrak b$.

  The ideal identities place $[\alpha],[\beta]$ in $V_{K_a}$. If $[\mathfrak a]^e[\mathfrak b]^f=1$ for $e,f\in\{0,1\}$, Lemma~\ref{lem:virtual-units} puts $[\alpha^e\beta^f]$ in the span of $[\theta],[\theta-1]$. Lemma~\ref{lem:four-classes} forces $e=f=0$. Thus the four independent squareclasses yield two independent classes in $\Cl({K_a})[2]$ after quotienting by the two unit classes.
\end{proof}

\begin{samepage}
  \begin{corollary}\label{cor:hilbert}
    Under the ideal-class hypotheses of Theorem~\ref{thm:four-points}, the extension
    \[
      K_a\bigl(\sqrt{\theta(\theta-1)},\sqrt{(4\theta-3)(16\theta-15)}\bigr)/K_a
    \]
    is biquadratic and unramified at every finite and infinite prime. It is contained in the ordinary Hilbert class field.
  \end{corollary}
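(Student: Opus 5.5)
The plan is to realize $L=K_a(\sqrt{\gamma_1},\sqrt{\gamma_2})$, with $\gamma_1=\theta(\theta-1)$ and $\gamma_2=\alpha\beta$, as the Kummer extension attached to a two-dimensional subspace of $V_{K_a}$ from Lemma~\ref{lem:virtual-units}. Then I will check ramification locally: at the infinite places, at odd finite primes, and at the primes above $2$.

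\emph{Degree.} By Lemma~\ref{lem:four-classes}, the classes $[\gamma_1]$, $[\gamma_2]$ and $[\gamma_1\gamma_2]$ are nontrivial in $K_a^\times/K_a^{\times2}$, since each is a nontrivial product of the four independent classes. So $[L:K_a]=4$ and $L/K_a$ is biquadratic.

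\emph{Infinite places.} The signatures of $\theta$ and $\theta-1$ are both $(-,-,+)$, so $\gamma_1$ is totally positive. For $\alpha=4\theta-3$ and $\beta=16\theta-15$, the root intervals $(-a,1-a)$, $(-1,0)$ and $(1,2)$ show that both have signature $(-,-,+)$. Hence $\gamma_2$ is also totally positive, and no real place ramifies.

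\emph{Odd finite primes.} Under the ideal-class hypotheses, Theorem~\ref{thm:four-points} gives $(\alpha)=\mathfrak a^2$ and $(\beta)=\mathfrak b^2$, and $\gamma_1$ is a unit. So both $\gamma_1$ and $\gamma_2$ have even valuation at every finite prime. At a prime $\mathfrak q\nmid 2$, adjoining the square root of an element of even valuation is unramified, because after multiplying by a square of a uniformizer it becomes a $\mathfrak q$-adic unit.

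\emph{Primes above $2$ (main obstacle).} For a unit $\gamma$ at a prime above $2$, the extension $K_{a,\mathfrak q}(\sqrt{\gamma})$ is unramified exactly when $\gamma$ is congruent to a square modulo $4$. Equivalently, when $\gamma\equiv 1 \pmod 4$, the extension is generated by a root of $X^2-X+(1-\gamma)/4$. Since $\mathcal O_{K_a}\otimes\Z_2=\Z_2[\theta]$ and $\mathcal O_{K_a}/2\simeq\F_8$, the prime $2$ is inert. Every element of $\F_8$ is a square, so it suffices to show that $\gamma_1$ and $\gamma_2$ are squares modulo $4\mathcal O_{K_a}$.

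For $\gamma_2$ this is immediate: $\alpha\equiv\beta\equiv1\pmod 4$, as in the proof of Lemma~\ref{lem:obstruction}. For $\gamma_1$, I use Ennola's result that $K_a(\sqrt{\theta(\theta-1)})/K_a$ is unramified for $a\equiv2\pmod4$. Directly, $\gamma_1\equiv\epsilon=\theta^2+a\theta+1$ modulo squares, and I need $\epsilon$ to be a square modulo $4$. I will compute in $(\Z/4\Z)[X]/(X^3+X^2+2X-1)$: find the square root $s$ of $\theta^2+2\theta+1$ in $\F_8$, lift it, and check that $s^2\equiv\theta^2+2\theta+1\pmod 4$. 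If the check fails for this representative, I will instead test the product $\theta(\theta-1)=\theta^2-\theta$ itself, since multiplying by a unit square does not change the criterion. This finite check is the one genuinely computational step.

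Finally, all three quadratic subextensions are unramified everywhere, so $L/K_a$ is unramified everywhere. It is abelian, so by class field theory it lies in the ordinary Hilbert class field, which corresponds to the quotient $\Cl(K_a)/2\Cl(K_a)$. The subgroup $\langle[\mathfrak a],[\mathfrak b]\rangle$ is consistent with this via the Kummer pairing.
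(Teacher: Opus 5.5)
Your proposal is correct and follows essentially the same route as the paper: the degree from Lemma~\ref{lem:four-classes}, total positivity at the real places, even valuations at odd primes from $\mathfrak a^2=(\alpha)$ and $\mathfrak b^2=(\beta)$, the square-modulo-four criterion at the inert dyadic prime, and then class field theory. The mod-$4$ check you deferred (and hedged on) succeeds immediately: for $a\equiv2\pmod4$ one has $\epsilon=\theta^2+a\theta+1\equiv\theta^2+2\theta+1=(\theta+1)^2\pmod4$, so $\theta(\theta-1)=\epsilon(\theta-1)^2\equiv(\theta^2-1)^2\equiv(\theta^2+1)^2\pmod{4\mathcal O_{K_a}}$, which is exactly the congruence the paper uses.
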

\end{samepage}

The proof uses signatures and congruences modulo four, as in \cite[\S I.6(b--c) and Proposition I.5]{GrasGras1975}; we give the local argument for the present non-Galois fields.

\begin{proof}
  Let $h_1=\theta(\theta-1)$ and $h_2=\alpha\beta$. Both are totally positive and have even valuations at every finite prime, by the ideal identities. Hence at odd primes their quadratic extensions are unramified or split. At primes above two both are units, and
  \[
    h_1\equiv(\theta^2+1)^2\pmod{4\mathcal O_{K_a}},\qquad
    h_2\equiv1\pmod{4\mathcal O_{K_a}}.
  \]
  For a unit $h\equiv c^2\pmod4$, the polynomial $Z^2-cZ+(c^2-h)/4$ is integral and has unit discriminant $h$; its quadratic algebra is therefore unramified or split at a dyadic prime. At every real place, total positivity makes both quadratic extensions split. Finally, Lemma~\ref{lem:four-classes} makes $[h_1]$ and $[h_2]$ independent, so the extension has degree four.
\end{proof}

\begin{samepage}
  \begin{corollary}\label{cor:three-points}
    For the parameters in Corollary~\ref{cor:composite}, the points
    \[
      (0,1),\qquad (-1,1),\qquad (-3/4,m^{N/2}/8)
    \]
    on $E_a$ are independent modulo torsion and generate a subgroup saturated at two.
  \end{corollary}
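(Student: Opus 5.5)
The argument will be the first half of the proof of Theorem~\ref{thm:four-points}, with the two points $S,T$ replaced by a single point and Lemma~\ref{lem:four-classes} replaced by a three-class independence statement. Take $a=(m^N-73)/12$ as in Corollary~\ref{cor:composite}, so that $a\equiv2\pmod4$, $a\geq3$, and $12a+73=m^N$. I will first check that $R_3=(-3/4,m^{N/2}/8)$ lies on $E_a$. The norm identity \eqref{eq:common-norm} at $x=-3/4$ gives
\[
x(x+1)(x-a)+1=\Nm_{K_a/\Q}(\theta-3/4)=\Nm_{K_a/\Q}(\alpha)/64=(12a+73)/64=m^N/64,
\]
and $N$ is even, so $y=m^{N/2}/8$ works. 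The points $(0,1)$ and $(-1,1)$ lie on the curve by direct substitution.

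Next I use the homomorphism $\delta:E_a(\Q)\to K_a^\times/K_a^{\times2}$, $(x,y)\mapsto[x+\theta]$, from the proof of Theorem~\ref{thm:four-points}. It kills doubles and the (odd-order) torsion. The three images are $[\theta]$, $[\theta-1]$, and $[\theta-3/4]=[\alpha]$, since $64$ is a square. The key step is to show these three classes are independent in $K_a^\times/K_a^{\times2}$. Lemma~\ref{lem:units} excludes the nontrivial products of $[\theta]$ and $[\theta-1]$. Lemma~\ref{lem:obstruction} with $D=4$ says $\alpha=4\theta-3$ is not a unit times a square, and this excludes the four products involving $\alpha$. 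So all seven nontrivial products are nonsquares. Unlike Lemma~\ref{lem:four-classes}, no mod-$13$ congruence is needed, because there is no second non-unit class.

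Independence modulo torsion then follows exactly as in Theorem~\ref{thm:four-points}. Suppose a nontrivial integral combination of the three points is torsion. Divide the coefficients by their gcd to make the relation primitive; the new combination is still torsion. Applying $\delta$ forces every coefficient to be even, contradicting primitivity.

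For saturation at two, suppose $2R=bP+cQ+eR_3$. Applying $\delta$ makes $b,c,e$ even. Then $R-\bigl((b/2)P+(c/2)Q+(e/2)R_3\bigr)$ is a rational $2$-torsion point, and it equals $O$ because the cubic is irreducible.

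The only step that needs care is the independence of $[\theta],[\theta-1],[\alpha]$. It rests on the hypothesis $a\equiv2\pmod4$ needed for Lemma~\ref{lem:obstruction}, which is supplied by Corollary~\ref{cor:composite}; the rest transfers verbatim from the earlier proof.
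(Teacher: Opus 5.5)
Your proposal is correct and matches the paper's proof: you verify the third point via the norm identity \eqref{eq:common-norm}, you get independence of $[\theta],[\theta-1],[4\theta-3]$ from Lemmas~\ref{lem:units} and~\ref{lem:obstruction} (with $a\equiv2\pmod4$ supplied by Corollary~\ref{cor:composite}), and you then rerun the descent and saturation arguments from Theorem~\ref{thm:four-points}. One trivial slip: $[\theta-3/4]=[\alpha]$ holds because $\theta-3/4=\alpha/4$ and $4$ is a square; the $64$ only enters the $y$-coordinate.
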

\end{samepage}

\begin{proof}
  The norm identity verifies the third point. Lemmas~\ref{lem:units} and \ref{lem:obstruction} make their three images under $\delta$ independent. The arguments proving independence modulo torsion and saturation at two in the proof of Theorem~\ref{thm:four-points} apply.
\end{proof}

\section{Pell parameters and a rational family}

\begin{samepage}
  \begin{theorem}\label{thm:pell}
    Put $\eta=9+2\sqrt{20}$. All positive integer pairs $(u,v)$ for which \eqref{eq:two-squares} holds with an integer $a\geq3$ satisfying $a\equiv30$ or $34\pmod{52}$ are given by
    \begin{align*}
      v+u\sqrt{20} & =(79+13\sqrt{20})\eta^k,
                   & k                         & \geq0,\quad k\equiv6,7,14,15,22,27\pmod{28}, \\
      v+u\sqrt{20} & =(191+41\sqrt{20})\eta^k,
                   & k                         & \geq0,\quad k\equiv0,5,12,13,20,21\pmod{28},
    \end{align*}
    with $a=(u^2-73)/12$. There are infinitely many such parameters and infinitely many geometric isomorphism classes of the associated elliptic curves.
  \end{theorem}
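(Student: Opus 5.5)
Eliminating $a$ from \eqref{eq:two-squares} gives $v^2-20u^2=240\cdot73\cdot\tfrac{1}{12}\cdot 12\cdot\tfrac1{12}\ldots$; concretely, $v^2=20(12a+73)+2861=20u^2+2861$. So every admissible pair is a positive solution of the generalized Pell equation $v^2-20u^2=2861$. Conversely, a solution gives an integer $a=(u^2-73)/12$ exactly when $u^2\equiv73\pmod{12}$, i.e.\ $\gcd(u,6)=1$. The plan therefore has three parts: classify the solutions, impose the congruences, and check $a\geq3$.

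\emph{Classification.} The fundamental unit of $\Z[\sqrt{20}]$ with norm $1$ is $\eta=9+2\sqrt{20}$, since $81-80=1$. By the standard theory (Nagell's bounds), each class of solutions of $v^2-20u^2=2861$ under multiplication by powers of $\eta$ contains a representative with $0<v\le\sqrt{2861\,\eta}/\sqrt2$ or so, i.e.\ with $u$ bounded by $\sqrt{2861}\cdot 2/\sqrt{2(\eta+1)}$, roughly $u\le 17$. A finite search over $0\le u\le 17$ should give the representatives $\pm(79+13\sqrt{20})$ and $\pm(191+41\sqrt{20})$ up to conjugation: check $79^2-20\cdot169=6241-3380=2861$ and $191^2-20\cdot1681=36481-33620=2861$. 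Because $v>0$, only $v+u\sqrt{20}>0$ matters. Conjugate classes $(79-13\sqrt{20})\eta^k$ for $k\ge1$ also need handling: I expect $(79-13\sqrt{20})\eta=191+41\sqrt{20}$. Indeed $79\cdot9-13\cdot40=191$ and $79\cdot2-13\cdot9=41$. So the two families cover all positive solutions with $u>0$ (check small $k$ directly). This also requires verifying that $2861$ is prime, or at least factoring it, to confirm there are exactly two classes.

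\emph{Congruences.} The sequences $(u_k,v_k)$ satisfy a linear recurrence from multiplication by $\eta$, so they are periodic modulo any $M$. For the condition I need $a\bmod 52$, so $u^2\bmod 624=12\cdot52$. It suffices to know $u\bmod 624$, or at least $u^2\bmod 624$, together with $\gcd(u,6)=1$. The matrix $\begin{pmatrix}9&40\\2&9\end{pmatrix}$ has some period modulo $4\cdot3\cdot13$ (or $8\cdot 3\cdot 13$ for squares). I expect the relevant period of $u_k^2\bmod 624$ to be $28$: it would be the lcm of small periods mod $8$, $3$ and $13$, with the order of $\eta$ mod $13$ dividing $14$ or $28$ in $\F_{169}$. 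I then tabulate $a_k\bmod 52$ for $k=0,\dots,27$ in each family. The listed residues $\{6,7,14,15,22,27\}$ and $\{0,5,12,13,20,21\}$ should emerge as exactly those with $a\equiv30,34$. The congruence $a\equiv30,34\pmod{52}$ forces $a\equiv 2\pmod 4$ and $u^2\equiv73\pmod{12}$, so integrality comes along automatically. The condition $a\ge3$ is $u^2\ge109$. It fails only for tiny $u$, namely $u=13$ with $a=8$, which is not $\equiv30,34$; so the excluded small terms are harmless but must be checked.

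\emph{Infinitude.} Each residue class of $k$ gives infinitely many $k$, so there are infinitely many $a$. For geometric isomorphism classes, compute $j(E_a)$ as a nonconstant rational function of $a$; each $j$-value is attained by boundedly many $a$, so unbounded $a$ gives infinitely many $j$-invariants. The main obstacle is the finite bookkeeping: justifying the bound in the fundamental-solution search and carrying out the period-$28$ table modulo $624$ correctly. I would do this by explicit computation with the recurrence, reducing modulo $8$, $3$ and $13$ separately.
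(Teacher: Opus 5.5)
This is essentially the paper's proof. You reduce to $v^2-20u^2=2861$ and find the orbit representatives $79+13\sqrt{20}$ and $191+41\sqrt{20}$ by a bounded search; the paper instead reduces $z=v+u\sqrt{20}$ into $[\sqrt{2861},\eta\sqrt{2861})$ by powers of $\eta^{-1}$, which avoids your Nagell bound and your conjugate-class identity $(79-13\sqrt{20})\eta=191+41\sqrt{20}$. You then read off the admissible $k$ from periodicity of the recurrence modulo $624$: your reduction to $u \bmod 8\cdot 3\cdot 13$, with $\eta$ of order $14$ modulo $13$, gives period $28$ directly, where the paper tabulates period $56$. Both arguments finish with the growth of $j(E_a)$.

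Fix two slips. First, Nagell's bound uses the rational part $9$ of $\eta$, not $\eta$ itself, so the search must cover $u\le 2\sqrt{2861}/\sqrt{20}<24$; it still finds only $u=13$. Second, $a\ge3$ does not fail at $u=13$ (there $a=8$), so it holds automatically for every positive solution. The primality of $2861$ plays no role.
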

\end{samepage}

\begin{proof}
  For a positive solution of $v^2-20u^2=2861$, set $z=v+u\sqrt{20}$. There is a unique integer $k\geq0$ with
  \[
    \sqrt{2861}\leq z\eta^{-k}<\eta\sqrt{2861}.
  \]
  Write $z_0=z\eta^{-k}=v_0+u_0\sqrt{20}$. Both coordinates are integers because $\eta^{-1}=9-2\sqrt{20}$. The conjugate of $z_0$ is $2861/z_0$. Hence $u_0=(z_0-2861/z_0)/(2\sqrt{20})$, and the reduction interval, together with $\eta-\eta^{-1}=4\sqrt{20}$, gives
  \[
    v_0>0,\qquad 0\leq u_0<2\sqrt{2861}<107.
  \]
  Testing $v_0^2=20u_0^2+2861$ modulo $16,3,7,11$ leaves eight candidates in the interval $0\leq u_0\leq106$: $u_0=13,19,41,47,55,79,85,97$. Modulo thirteen only $13,41,55$ remain. Their candidate squares are $79^2$, $191^2$, and $63361$, respectively; the last lies strictly between $251^2$ and $252^2$. Both remaining initial solutions lie in the reduction interval. Thus multiplication of $79+13\sqrt{20}$ and $191+41\sqrt{20}$ by $\eta^k$, $k\geq0$, gives all positive solutions before the congruence conditions on $a$ are imposed.

  In the coordinate order $(v,u)$, multiplication by $\eta$ gives the recurrence
  \begin{equation}\label{eq:recurrence}
    \binom{v_{k+1}}{u_{k+1}}=
    M\binom{v_k}{u_k},\qquad
    M=\begin{pmatrix}9&40\\2&9\end{pmatrix}.
  \end{equation}
  The congruences $a\equiv30\pmod{52}$ and $a\equiv34\pmod{52}$ correspond respectively to $u^2\equiv433\pmod{624}$ and $u^2\equiv481\pmod{624}$. Each initial solution has period $56$ under \eqref{eq:recurrence} modulo $624$; the complete calculation gives Table~\ref{tab:pell}. The matrix has determinant one, and each initial ordered pair first returns after $56$ applications of $M$. Thus the table is exhaustive. The exponents satisfying either congruence repeat modulo $28$, giving the stated lists. The smallest admissible parameter is $a=134$, corresponding to $(v,u)=(191,41)$.

  \begin{table}[ht]
    \centering

    \begin{tabular}{ccc}
      \toprule
      Initial solution $(v_0,u_0)$ & $a\bmod52$ & Admissible exponents $k$ \\
      \midrule
      $(79,13)$                    & $30$       & $6,15,22,27,34,43,50,55$ \\
      $(79,13)$                    & $34$       & $7,14,35,42$             \\
      $(191,41)$                   & $30$       & $0,5,12,21,28,33,40,49$  \\
      $(191,41)$                   & $34$       & $13,20,41,48$            \\
      \bottomrule
    \end{tabular}
    \caption{Complete congruence check for the two initial Pell solutions modulo $624$. The listed exponents lie in $0\leq k<56$.}
    \label{tab:pell}
  \end{table}

  Both positive coordinates grow strictly along either orbit. Finally,
  \[
    j(E_a)=\frac{256(a^2+a+1)^3}{d_a}\sim256a^2
    \quad(a\longrightarrow\infty).
  \]
  The resulting $j$-invariants are unbounded, proving the assertion about geometric isomorphism classes.
\end{proof}

\begin{proposition}\label{prop:subsequence}
  In the orbit starting from $(v_0,u_0)=(191,41)$, take $k=28r$ with $r\geq0$. The ideal-class hypotheses of Theorem~\ref{thm:four-points} hold exactly when $r\not\equiv63\pmod{313}$. These parameters give infinitely many isomorphism classes of cubic fields.
\end{proposition}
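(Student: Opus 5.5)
The plan is to treat the ideal-class hypotheses $137\nmid u$ and $\gcd(v,23\cdot2503)=1$ as periodicity conditions along the subsequence $k=28r$. Write $z_r=(191+41\sqrt{20})\eta^{28r}$ and $(v,u)=(v_{28r},u_{28r})$. First I will check, from the list in Theorem~\ref{thm:pell} (the residue $0$ is admissible for the orbit of $191+41\sqrt{20}$), that every $k=28r$ is admissible, so each $a$ in the subsequence satisfies the hypotheses of Theorem~\ref{thm:four-points}. It then remains to decide, for each of the primes $p\in\{137,23,2503\}$, when $p\mid u$ (for $p=137$) or $p\mid v$ (for $p=23,2503$).

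For each prime $p$, I reduce the recurrence \eqref{eq:recurrence} modulo $p$ and use $M^{28}$, the matrix of multiplication by $\eta^{28}$. The sequence $(v_{28r},u_{28r})\bmod p$ is purely periodic, with period the order of $M^{28}$ in $\mathrm{SL}_2(\F_p)$. Since $20$ is or is not a square mod $p$, this order divides $p-1$ or $p+1$. I will compute $\eta^{28}$ in $\F_p$ or $\F_{p^2}$ and find its order $t_p$. I then list the $r\in\{0,\dots,t_p-1\}$ with $u\equiv0$ or $v\equiv0$. Vanishing of a coordinate amounts to $z_r/\bar z_r=-1$ or $+1$ respectively (for $u$: $z=\bar z$; for $v$: $z=-\bar z$). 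So it becomes a discrete-log condition $(\eta^{28}/\bar\eta^{28})^r=\eta^{56r}=\pm\,\bar z_0/z_0$ in the norm-one torus modulo $p$. This has either no solution or exactly one residue class modulo the order of $\eta^{56}$.

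The expected outcome is that for $p=23$ and $p=2503$ there is no solution. I can establish this by showing $\pm\bar z_0/z_0\notin\langle\eta^{56}\rangle$, for instance via a character or index argument, or by the finite check of a period. For $p=137$, I expect a single class $r\equiv63\pmod{313}$. Since $137+1=138$ and $137-1=136$ are not multiples of $313$, the modulus $313$ must instead come from $2503$. Here $2504=8\cdot313$, so the vanishing condition lives at $2503$ and concerns $v$, and the bad class is $r\equiv63\pmod{313}$. I will then show that $137\nmid u$ and $23\nmid v$ hold for all $r$ by the non-membership argument. The main obstacle is exactly this explicit finite-field computation: identifying which prime produces the obstruction, verifying the order $313$ of $\eta^{56}$ modulo $2503$ (after reducing by the factor $8$ and the quadratic character), and solving the discrete logarithm to get $63$. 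I would do it by repeated squaring of $M^{28}$ modulo each prime, stating the resulting residues as verifiable identities in the same style as the proof of Proposition~\ref{prop:nonmaximal}.

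Finally, for infinitely many fields I use that the complement of the single residue class is infinite and that $a=(u^2-73)/12$ is strictly increasing along the orbit. By Theorem~\ref{thm:four-points}, each such $K_a$ has $\Cl(K_a)[2]\supseteq(\Z/2\Z)^2$, but that alone does not distinguish fields. Instead I will argue via discriminants. $K_a$ determines $\operatorname{disc}(K_a)$, and $d_a/\operatorname{disc}(K_a)$ is the square of the index. The cleaner route is that isomorphic fields have isomorphic unit groups, and the regulator grows with $a$ since $\theta,\theta-1$ have a conjugate near $-a$: the regulator of the subgroup they generate is at least of order $(\log a)^2$, and the index of that subgroup in the full unit group is bounded using Lemma~\ref{lem:units}-type arguments. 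If this is awkward, I would use instead that only finitely many cubic fields have discriminant below any bound, together with the Minkowski-type fact that $\theta$ of height $\approx a$ forces $|\operatorname{disc}(K_a)|\to\infty$ because a field contains only finitely many units of bounded height.
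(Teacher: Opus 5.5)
Your treatment of the coprimality conditions follows the paper: reduce \eqref{eq:recurrence} modulo $137$, $23$ and $2503$, and use periodicity. The paper tabulates the full orbits, with periods $46$, $8$ and $2504$. The only vanishing it finds is $2503\mid v_k$ for $k\equiv512\pmod{1252}$, and it then solves $28r\equiv512\pmod{1252}$.

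Your discrete-logarithm formulation with $\eta^{56}$ in the norm-one torus is an equivalent packaging. All three primes are inert in $\Q(\sqrt5)$, so the tori have orders $138$, $24$ and $2504$. For example, $M^4\equiv-I\pmod{23}$ gives $v_{28r}\equiv\pm191\not\equiv0$.

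Some cleanup is needed:
\begin{itemize}
\item Delete the sentence that first assigns the class $r\equiv63\pmod{313}$ to $p=137$.
\item Fix the reversed ``respectively'': $u=0$ iff $z/\bar z=1$, and $v=0$ iff $z/\bar z=-1$.
\item Actually carry out and record the residues rather than stating expectations. The paper certifies the $2503$ case with explicit identities such as $M^{1252}w\equiv-w$ and $M^{512}w\equiv(0,1022)^{\mathsf T}$.
\end{itemize}

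The genuine gap is the final assertion that these parameters give infinitely many isomorphism classes of fields. Neither of your routes works.

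\textbf{Regulator route.} This needs a uniform upper bound on the index of $\langle-1,\theta,\theta-1\rangle$ in $\mathcal O_{K_a}^\times$. Lemma~\ref{lem:units} only shows that this index is odd; it gives no uniform bound. The paper deliberately avoids Ennola's conjecture, which would make the index one. Without such a bound, nothing prevents a single field $K$ of fixed regulator from containing the units $\theta_a$ for infinitely many $a$, with the index simply growing.

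\textbf{Height route.} This is a non sequitur. A fixed field has units of arbitrarily large height, so the size of $\theta_a$ gives no lower bound on $\operatorname{disc}(K_a)$. Nor does the growth of $d_a$ pass to $\operatorname{disc}(K_a)=d_a/[\mathcal O_{K_a}:\Z[\theta]]^2$ without control of the index, which can be nontrivial by Proposition~\ref{prop:nonmaximal}.

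\textbf{Missing idea.} The point you need is that $\theta$ is an exceptional unit: both $\theta$ and $1-\theta$ are units. Suppose infinitely many of the $K_a$ were isomorphic to one field $K$. Then the images $x$ of $\theta$ in $K$ would give infinitely many solutions of the unit equation $x+y=1$ in $\mathcal O_K^\times$. These are pairwise distinct, because $f_a(x)=0$ recovers $a=(1-x^3+x^2)/(x^2-x)$. This contradicts the finiteness theorem \cite[Theorem IX.4.1]{Silverman2009}. That is the paper's argument, and it needs no information about indices, regulators or discriminants.
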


\begin{proof}
  Theorem~\ref{thm:pell} gives the hypotheses for the four-point conclusion. The remaining congruences follow from the complete periods in Table~\ref{tab:coprime}, obtained from \eqref{eq:recurrence}. In each row the initial ordered pair first returns at the displayed period; invertibility of $M$ modulo the modulus excludes a nonperiodic initial segment. The two residues in the last row say $2503\mid v_k$ exactly when $k\equiv512\pmod{1252}$. For $k=28r$ this becomes $7r\equiv128\pmod{313}$, or $r\equiv63\pmod{313}$.

  \begin{table}[ht]
    \centering

    \begin{tabular}{cccc}
      \toprule
      Modulus & Period & Coordinate & Vanishing indices $k$ \\
      \midrule
      $137$   & $46$   & $u_k$      & none                  \\
      $23$    & $8$    & $v_k$      & none                  \\
      $2503$  & $2504$ & $v_k$      & $512,1764$            \\
      \bottomrule
    \end{tabular}
    \caption{Complete coprimality checks on the orbit from $(191,41)$. The last column lists the indices $k$ within one period at which the indicated coordinate vanishes modulo the modulus.}
    \label{tab:coprime}
  \end{table}

  For modulus $2503$, the period and the vanishing indices follow from explicit identities. With $w=(191,41)^{\mathsf T}$, calculation modulo $2503$ gives
  \begin{gather*}
    M^8w=(52,993)^{\mathsf T},\qquad M^{512}w=(0,1022)^{\mathsf T},\\
    M^{1252}w=-w,\qquad M^{1764}w=(0,1481)^{\mathsf T},\qquad M^{2504}w=w.
  \end{gather*}
  Since $2504=8\cdot313$ and $313$ is prime, every proper divisor of $2504$ divides either $8$ or $1252$. The first and third identities therefore prove the period. The invariant $v^2-20u^2=2861$ has at most two residue pairs with $v=0$ modulo the prime $2503$; the displayed pairs give both. This proves that the listed vanishing indices are exhaustive.

  The equation $x+y=1$ has only finitely many solutions in units of a fixed number field \cite[Theorem IX.4.1]{Silverman2009}. If infinitely many of these fields were isomorphic to one fixed field $K$, the images $x$ of $\theta$ would give infinitely many units for which $1-x$ is a unit. They are distinct for distinct parameters because
  \[
    a=\frac{1-x^3+x^2}{x^2-x},\qquad x\ne0,1.
  \]
  This is a contradiction, and a finite collection of fields cannot account for the infinite sequence.
\end{proof}

We now allow $a$ to be a rational function on a curve over $\Q$. The restrictions $a\in\Z$ and $a\geq3$ no longer apply to the remaining constructions.

\begin{corollary}\label{cor:rational-family}
  For an indeterminate $t$, put
  \begin{align*}
    u(t) & =\frac{41t^2-382t+820}{20-t^2},    \\
    v(t) & =\frac{191t^2-1640t+3820}{20-t^2}, \\
    a(t) & =\frac{u(t)^2-73}{12}.
  \end{align*}
  The four sections
  \[
    (0,1),\quad(-1,1),\quad(-3/4,u(t)/8),\quad(-15/16,v(t)/64)
  \]
  are independent modulo torsion in $E_{a(t)}(\Q(t))$ and generate a subgroup saturated at two. In particular, the rank over $\Q(t)$ is at least four.
\end{corollary}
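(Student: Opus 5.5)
The plan is to reduce everything to the single integral specialization $t=0$, where $u(0)=820/20=41$ and $v(0)=3820/20=191$. These give $a(0)=(41^2-73)/12=134$. By Theorem~\ref{thm:pell} this is an admissible parameter with $134\equiv30\pmod{52}$.

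\textbf{The sections lie on the curve.} I would first check the polynomial identity
\[
v(t)^2-20u(t)^2=2861 \quad\text{in }\Q(t).
\]
Here $u,v$ are just the parametrization of the conic $V^2-20U^2=2861$ by lines through $(191,41)$, so this is a routine expansion. Together with $a(t)=(u(t)^2-73)/12$, it gives both square conditions
\[
u^2=12a+73,\qquad v^2=240a+4321
\]
over $\Q(t)$. The norm identity \eqref{eq:common-norm} and the norm formulas \eqref{eq:alpha-beta} hold formally for any value of $a$. Hence the four sections lie on $E_{a(t)}$.

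\textbf{The generic field and the map $\delta$.} Let $L=\Q(t)[X]/(f_{a(t)})$. Since $f_{a(t)}$ specializes at $t=0$ to the irreducible $f_{134}$, it is irreducible over $\Q(t)$. Thus $L$ is a cubic field over $\Q(t)$ and $E_{a(t)}$ has no $\Q(t)$-rational two-torsion. The homomorphism
\[
\delta\colon E_{a(t)}(\Q(t))\to L^\times/L^{\times2}
\]
is defined by the same chord-and-tangent argument as in the proof of Theorem~\ref{thm:four-points}. It kills doubles and torsion of odd order. The torsion of $E_{a(t)}(\Q(t))$ is odd, since any $2$-power torsion point would produce rational two-torsion.

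\textbf{Independence of the squareclasses.} The key step is showing that $[\theta],[\theta-1],[\alpha],[\beta]$ are independent in $L^\times/L^{\times2}$. Suppose a nontrivial product $\gamma$ of these were a square $\xi^2$ in $L$. Consider the local ring of $\Q[t]$ at $t=0$, with $L$ integral over it through $\theta$. The element $\gamma$ is a unit there, because its norm is a polynomial in $a(t)$ that is nonzero at $t=0$. Then $\xi$ is integral over this local ring, and reducing modulo $t$ gives a square root of the specialized product in $K_{134}$. This contradicts Lemma~\ref{lem:four-classes}.

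\textbf{Independence and saturation.} Given this, the primitive-relation argument and the saturation-at-two argument from the proof of Theorem~\ref{thm:four-points} apply verbatim. As an alternative check on independence modulo torsion, one can use the specialization homomorphism $E_{a(t)}(\Q(t))\to E_{134}(\Q)$. This map is defined since $E_{a(t)}$ has good reduction at $t=0$, as $16d_{134}\neq0$, and it sends the four sections to $P,Q,S,T$ of Theorem~\ref{thm:four-points}, which are independent. The rank bound follows.

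\textbf{Main obstacle.} The delicate point is the descent of the squareclass argument to the function-field setting. One must make sure the reduction at $t=0$ is legitimate, meaning that $\Q[t]_{(t)}[\theta]$ behaves well and the specialized elements are units. The chosen $t=0$ makes this clean, because everything is regular there and the specialized discriminant is nonzero.
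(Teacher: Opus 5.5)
Your proof is correct, but you specialize at a different level than the paper does.

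\textbf{The paper's route.} The paper never builds a descent map over $\Q(t)$. It notes that the Weierstrass model is an elliptic scheme over $\mathcal R=\Q[t]_{(t)}$. By properness every $\Q(t)$-point extends, so specialization $E_{a(t)}(\Q(t))\to E_{134}(\Q)$ is a homomorphism. Independence is then inherited directly from Theorem~\ref{thm:four-points}. Saturation follows by specializing $2R=\sum n_iP_i$, applying $\delta$ on $E_{134}$, and using the absence of two-torsion over $\Q(t)$.

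\textbf{Your route.} You specialize squareclasses rather than points. This transports Lemma~\ref{lem:four-classes} to $L^\times/L^{\times2}$, and you then rerun the descent over $\Q(t)$ itself.

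\textbf{The step you flag as delicate.} It should be justified precisely as follows; the fact that $\gamma$ is a unit plays no role. Since $d_{134}\neq0$, the discriminant $d_{a(t)}$ of $f_{a(t)}$ is a unit in $\mathcal R$. So $\mathcal R[\theta]$ is finite \'etale over $\mathcal R$, and therefore equals the integral closure of $\mathcal R$ in $L$. Hence $\xi\in\mathcal R[\theta]$, and $\mathcal R[\theta]/(t)\simeq\Q[X]/(f_{134})=K_{134}$. This is what makes ``reducing modulo $t$'' land in $K_{134}$.

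\textbf{Comparison.} Your route replaces the extension of arbitrary $\Q(t)$-points over $\mathcal R$ with this normality statement about the cubic order. In exchange it yields the intrinsic fact that the four squareclasses are independent over the function field. The paper's route is shorter, because Theorem~\ref{thm:four-points} already packages the descent on $E_{134}$. Your ``alternative check'' via the specialization homomorphism is exactly the paper's independence argument.

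\textbf{A small slip.} The displayed parametrization comes from lines through $(u,v)=(-41,191)$. The point $(v,u)=(191,41)$ you mention is instead the one reached at $t=0$. This is harmless, since you verify the conic identity by direct expansion.
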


\begin{proof}
  Intersecting the line $v=191-t(u+41)$ through $(u,v)=(-41,191)$ with $v^2-20u^2=2861$ gives the displayed rational parametrization at its second intersection. Direct substitution verifies the equation, and $u(0)=41$, $v(0)=191$, $a(0)=134$. The projective Weierstrass equation has coefficients in $\Q[t]_{(t)}$ and unit discriminant there, so it defines an elliptic scheme over this local ring. By properness, every $\Q(t)$-point extends over the local ring, and the group law makes specialization a homomorphism. A nontrivial integral relation modulo torsion therefore specializes to such a relation on $E_{134}$, contrary to Theorem~\ref{thm:four-points}. This proves independence.

  There is no nonzero two-torsion over $\Q(t)$. Otherwise $f_{a(t)}$ would have a root in $\Q(t)$. Since $a(t)$ is regular at zero, this polynomial is monic over the integrally closed local ring $\Q[t]_{(t)}$. The root would belong to that ring and reduce to a rational root of $f_{134}$, contradicting irreducibility.

  If twice a rational section is an integral combination of the four sections, specialize at zero and apply the squareclass independence on $E_{134}$. All four coefficients are even. Subtracting the combination with these halved coefficients from the section leaves rational two-torsion over $\Q(t)$, hence zero. This proves saturation at two.
\end{proof}


\section{Infinitely many rational parameters with rank at least five}

To obtain a fifth point, impose $w^2=577-72a$. Together with the two square conditions in \eqref{eq:two-squares}, this defines the curve $C\subset\mathbb P^3_{\Q}$ with homogeneous coordinates $[U:V:W:Z]$ and equations
\begin{equation}\label{eq:rank-five-base}
  V^2-20U^2=2861Z^2,\qquad W^2+6U^2=1015Z^2.
\end{equation}
On $Z\ne0$ put $u=U/Z$, $v=V/Z$, $w=W/Z$, and $a=(u^2-73)/12$. The second equation gives $w^2=577-72a$, so $(8,w)$ lies on $E_a$.

\begin{proposition}\label{prop:rank-five-rational}
  The curve $C$ is a smooth genus-one curve with infinitely many rational points. Over $\Q(C)$ the five points
  \[
    (0,1),\quad(-1,1),\quad(-3/4,u/8),\quad(-15/16,v/64),\quad(8,w)
  \]
  are independent modulo torsion and generate a subgroup saturated at two. Consequently, infinitely many geometrically nonisomorphic curves $E_a$, with $a\in\Q$, satisfy both square conditions in \eqref{eq:two-squares} and have rank at least five.
\end{proposition}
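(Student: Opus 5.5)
The plan has four parts: smoothness and genus of $C$, an explicit rational point together with infinitely many others, independence and saturation over $\Q(C)$ by specializing at one good point, and then passage to infinitely many rational fibres.

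\emph{Geometry of $C$.} The curve $C$ is the base locus of the pencil of diagonal quadrics
\[
\lambda(V^2-20U^2-2861Z^2)+\mu(W^2+6U^2-1015Z^2).
\]
The singular members of this pencil correspond to the zeros of the determinant
\[
\lambda\mu\,(6\mu-20\lambda)(-2861\lambda-1015\mu).
\]
Its four roots in $\mathbb P^1$ are distinct, namely $[1:0]$, $[0:1]$, $[3:10]$ and $[1015:-2861]$. The standard criterion for intersections of two diagonal quadrics then says that $C$ is a smooth complete intersection of two quadrics in $\mathbb P^3$. Such a curve has genus one.

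\emph{Rational points.} A rational point is found by inspection. Take $u=13$: then $1015-6\cdot169=1$ and $2861+20\cdot169=6241=79^2$, so $x_0=[13:79:1:1]\in C(\Q)$, and it has $a=8$. Choosing $x_0$ as origin makes $C$ an elliptic curve over $\Q$. I would compute a Weierstrass model, for instance by projecting from $x_0$ or via the standard quartic model of $V^2=2861+20u^2$ with $w^2=1015-6u^2$. I would then exhibit a second rational point, obtained by sign changes such as $[-13:79:1:1]$, and show it has infinite order. For this I would use Nagell--Lutz, or Mazur's bound by producing a multiple with nonintegral coordinates. I expect this explicit computation to be the main obstacle. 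If the sign-change points all turn out to be torsion, my first fallback is a short search for other small points.

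\emph{Independence and saturation over $\Q(C)$.} I would mirror Corollary~\ref{cor:rational-family}. Since $C$ is smooth at $x_0$, its local ring $\mathcal O_{C,x_0}$ is a DVR in which $a$ is regular with value $8$. The fibre $E_8$ is smooth because $d_8=87^2-32\ne0$, so the Weierstrass model is an elliptic scheme over this DVR. Properness and the group law then make specialization a homomorphism $E(\Q(C))\to E_8(\Q)$. Under it the five sections specialize to $(0,1)$, $(-1,1)$, $(-3/4,13/8)$, $(-15/16,79/64)$ and $(8,1)$; the last lies on $E_8$ since $\Nm(8+\theta)=577-72a$. Because $a=8\not\equiv2\pmod4$, Lemma~\ref{lem:obstruction} is unavailable. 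I would instead verify directly that
\[
[\theta],\ [\theta-1],\ [4\theta-3],\ [16\theta-15],\ [8+\theta]
\]
are independent in $K_8^\times/K_8^{\times2}$. The invariants I would use are the sign vector at the three real places together with quadratic residue symbols at several degree-one primes $(p,\theta-r)$ of $K_8$ with $p\nmid d_8$ and $r$ a root of $f_8$ modulo $p$, until the resulting $\F_2$-matrix has rank five. The connecting map $\delta$ then shows, exactly as in the proof of Theorem~\ref{thm:four-points}, that the five points on $E_8$ are independent modulo torsion, and hence so are the sections. For saturation, nonzero $2$-torsion over $\Q(C)$ would give a root of $f_a$ in $\Q(C)$. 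That root is integral over $\mathcal O_{C,x_0}$ and would reduce to a rational root of $f_8$, which does not exist. Halving coefficients after specialization then works as before.

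\emph{Infinitely many rational fibres.} The map $C\to\mathbb P^1$, $x\mapsto a$, is nonconstant of finite degree. Since $C(\Q)$ is infinite, this gives infinitely many rational values of $a$. The map $a\mapsto j(E_a)$ is nonconstant, so these values give infinitely many distinct $j$-invariants. Finally, Silverman's specialization theorem for the nonconstant family $E/\Q(C)$ over the curve $C$ says that specialization is injective for all but finitely many points of $C(\Q)$. Hence all but finitely many of these fibres have rank at least five, and they still satisfy both square conditions \eqref{eq:two-squares}.
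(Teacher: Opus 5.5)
Your independence and saturation step fails at the point where you specialize. At $x_0=[13:79:1:1]$ you have $a=8$ and $w=1$, so the fifth section specializes to $(8,1)=(a,1)$. For every $a$, the line $y=1$ meets $E_a$ exactly where $x(x+1)(x-a)=0$. Hence
\[
  (0,1)+(-1,1)+(a,1)=O,
\]
and the five specialized points on $E_8$ satisfy a genuine linear relation. The descent side shows the same thing. Since $\theta(\theta-1)(\theta+8)=f_8(\theta)+1=1$, the element $\theta+8$ is a norm-one unit and $[\theta+8]=[\theta][\theta-1]$, as Lemma~\ref{lem:units} predicts. The fifth column of your $\F_2$-matrix is therefore the sum of the first two, so no choice of sign vectors or residue symbols gives rank five. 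Changing signs over $u=\pm13$ does not help: there $w=\pm1$, which is equivalent to $a=8$. Your saturation argument halves coefficients using the independence of the $\delta$-images in the fibre, so it fails at $x_0$ too.

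What you need is a rational point of $C$ with $a\neq8$ at which the five descent images really are independent. The paper specializes at $c_0=[-59:1679:977:31]$, where $a_0=-5556/961$. It checks that $f_{a_0}$ is irreducible via its reduction $X^3+X^2+1$ modulo two, and it exhibits an invertible $5\times5$ residue matrix at degree-one primes above $3,5,7,11,13$. The rest of your specialization machinery then works unchanged at $c_0$: the elliptic scheme over the local ring, and the absence of two-torsion via the monic polynomial. Separately, your proof that $C(\Q)$ is infinite is still an unexecuted Weierstrass computation. You can close it without a model. Good reduction at $17$ and $31$, with $\#C(\F_{17})=16$ and $\#C(\F_{31})=28$, bounds the rational torsion of the Jacobian by $4$. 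Your eight distinct points $[\pm13:\pm79:\pm1:1]$ therefore already force a point of infinite order, so the fallback search is unnecessary.
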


\begin{proof}
  The pencil of quadrics generated by the two equations has determinant proportional to
  \[
    \lambda\mu(-20\lambda+6\mu)(-2861\lambda-1015\mu).
  \]
  This homogeneous polynomial has four distinct zeros $[\lambda:\mu]\in\mathbb P^1$. Each singular quadric in this diagonal pencil has its vertex at a coordinate point, and none of these points lies on the intersection. The Jacobian criterion therefore shows that the intersection is smooth. As a complete intersection of two quadrics in $\mathbb P^3$, it is geometrically connected. The adjunction formula gives $\omega_C\simeq\mathcal O_C$ for the canonical sheaf $\omega_C$, so $C$ has genus one. The same calculation gives good reduction at $17,19,31$: none divides $2\cdot20\cdot6\cdot2861\cdot1015\cdot37466$.

  To show that $C(\Q)$ is infinite, we compare its rational points with the torsion bound from good reduction. Direct enumeration gives
  \[
    \#C(\F_{17})=16,\qquad \#C(\F_{19})=24,\qquad \#C(\F_{31})=28.
  \]
  These small counts can be obtained by summing $r_p(20u^2+2861)r_p(1015-6u^2)$ over $u\in\F_p$, then adding $r_p(20)r_p(-6)$ for the points at infinity, where $r_p(b)=\#\{z\in\F_p:z^2=b\}$. The affine counts are $16,24,24$, and the contributions at infinity are $0,0,4$.

  The eight choices of signs in
  \[
    [U:V:W:Z]=[\pm59:\pm1679:\pm977:31]
  \]
  give eight distinct rational points. Choose one as origin, identifying $C$ with its Jacobian. By properness, this point extends to a section of each smooth model at the stated primes, giving good reduction for the resulting elliptic curve. Prime-to-$p$ torsion injects under good reduction \cite[Proposition VII.3.1(b)]{Silverman2009}. Reduction at $17$ excludes every odd primary torsion subgroup except possibly the $17$-primary subgroup; reduction at $31$ excludes that subgroup as well. The two-primary torsion injects at both primes, so its order divides $\gcd(16,28)=4$. At least one of the eight rational points therefore has infinite order, and $C(\Q)$ is infinite.

  We next prove independence by specializing at $c_0=[-59:1679:977:31]$ with $a_0=-5556/961$. The five points become
  \[
    (0,1),\quad(-1,1),\quad(-3/4,-59/248),\quad
    (-15/16,1679/1984),\quad(8,977/31).
  \]
  The cubic $f_{a_0}(X)$ reduces modulo two to $X^3+X^2+1$, so it is irreducible. Let $\theta$ be a root. To certify independence of the descent values, use their squareclass representatives
  \[
    \theta,\quad\theta-1,\quad4\theta-3,\quad16\theta-15,\quad\theta+8.
  \]
  For $(p,r)=(3,2),(5,4),(7,5),(11,4),(13,2)$, the polynomial $f_{a_0}(X)$ lies in $\Z_p[X]$, and Hensel's lemma lifts its simple root $r$ modulo $p$ to a root in $\Q_p$. The resulting embedding $\Q(\theta)\hookrightarrow\Q_p$ defines a prime $\mathfrak p$ of residue degree one at which $\theta\equiv r\pmod{\mathfrak p}$. The five displayed elements have nonzero residues there. Record a square residue as $0$ and a nonsquare as $1$. With rows ordered by the pairs $(p,r)$ and columns by the five displayed elements, this gives
  \begin{equation}\label{eq:rank-five-residue-matrix}
    \begin{pmatrix}
      1 & 0 & 1 & 1 & 0 \\
      0 & 1 & 1 & 0 & 1 \\
      1 & 0 & 1 & 0 & 1 \\
      0 & 0 & 1 & 0 & 0 \\
      1 & 0 & 1 & 0 & 0
    \end{pmatrix},
  \end{equation}
  whose determinant is $1$ in $\F_2$. If a product of these elements were a square in $\Q(\theta)$, its nonzero residue at each chosen prime would be a square. Invertibility of the matrix therefore excludes all $31$ nonempty squareclass products. The descent argument in the proof of Theorem~\ref{thm:four-points} proves independence modulo torsion of the five specialized points. This argument uses only irreducibility and nonsingularity, so it applies at the rational parameter $a_0$.

  Since $a$ is regular at $c_0$ and the discriminant is a unit in $\mathcal O_{C,c_0}$, the projective Weierstrass equation defines an elliptic scheme over this local ring. Properness extends every $\Q(C)$-point over the local ring, and specialization is a group homomorphism. A nontrivial integral relation modulo torsion over $\Q(C)$ would specialize to such a relation in the fibre at $c_0$. Thus the five sections are independent over $\Q(C)$.

  To prove saturation at two, put $F=\Q(C)$ and $\mathcal R=\mathcal O_{C,c_0}$. This local ring is an integrally closed discrete valuation ring with residue field $\Q$, and $a$ is regular at $c_0$. A root of the monic polynomial $f_a\in\mathcal R[X]$ in $F$ would belong to $\mathcal R$ and reduce to a rational root of $f_{a_0}$. Hence $E_a(F)[2]=0$. Denote the five sections by $P_1,\ldots,P_5$. If $2R=\sum_i n_iP_i$ in $E_a(F)$, specialization at $c_0$ and the invertible residue matrix force every $n_i$ to be even. The point $R-\sum_i(n_i/2)P_i$ is then two-torsion and vanishes. This proves saturation at two.

  Finally, we specialize to infinitely many fibres. For fixed $u$, the two equations defining $C$ allow at most four affine pairs $(v,w)$, so $u$ is nonconstant on $C$. Hence $a=(u^2-73)/12$ is nonconstant, and the nonconstant rational function $j(E_a)$ displayed in the proof of Theorem~\ref{thm:pell} remains nonconstant on $C$. The specialization theorem \cite[Theorem C.20.3]{Silverman2009} therefore makes specialization injective at all but finitely many points of $C(\Q)$. The five independent sections give rank at least five in infinitely many fibres over points of $C(\Q)$. The map $j:C\to\mathbb P^1$ has finite fibres, so these include infinitely many curves that are pairwise nonisomorphic over $\overline{\Q}$.
\end{proof}

\begin{remark}\label{rem:rank-five-integrality}
  The construction in Proposition~\ref{prop:rank-five-rational} has exactly one integral value of $a$. If $a\in\Z$, then $u^2=12a+73\geq0$ and $w^2=577-72a\geq0$ give $-6\leq a\leq8$. A rational number whose square is an integer is an integer. The first square condition leaves $a=-6,-4,-2,4,8$; the corresponding values of $v^2$ are $2881,3361,3841,5281,6241$, of which only $6241=79^2$ is a square. Hence $a=8$, realized by $(u,v,w)=(13,79,1)$.
\end{remark}

For integers $a$, the first square condition gives $u^2\equiv1\pmod{12}$. After changing the sign of $u$ if necessary, write $u=6s+1$; then $a=3s^2+s-6$. This raises the question whether there are infinitely many integers $s$ with $3s^2+s-6\geq3$ for which $\rank E_{3s^2+s-6}(\Q)\geq5$.



\bibliographystyle{alpha}
\begingroup
\raggedright
\bibliography{ennola}
\endgroup
\end{document}